\newcommand{\ind}{1{\hskip -2.5 pt}\hbox{I}}
\numberwithin{equation}{section}
\newtheorem{theorem}{Theorem}[section]
\newtheorem{lemma}[theorem]{Lemma}
\newtheorem{proposition}[theorem]{Proposition}
\newtheorem{definition}[theorem]{Definition}
\def\KK{\mathbb{K}}
\def\cP{\mathcal{P}}
\def\al{{\alpha}}\def\be{{\beta}}\def\de{{\delta}}
\def\ep{{\epsilon}}\def\ga{{\gamma}}
\def\la{{\lambda}}
\def\<{\left<}\def\>{\right>}
\def\({\left(}\def\){\right)}
\def\goto{{\rightarrow}}
\begin{document}

\title{Generalized stepping stone model with $\Xi$-resampling mechanism}
\thanks{Huili Liu's research is supported by NSF of Hebei Province (A2019205299), Hebei Education Department (QN2019073), NSFC (11501164) and HNU (L2019Z01). Xiaowen Zhou's research is supported by Natural Sciences
and Engineering Research Council of Canada (RGPIN-2016-06704) and National Science Foundation of China (11771018).}
\author{Huili Liu and Xiaowen Zhou}
\address{Huili Liu: School of Mathematical Sciences, Hebei Normal University,
Shijiazhuang, Hebei, China}
\email{liuhuili@hebtu.edu.cn}
\address{Xiaowen Zhou: Department of Mathematics and Statistics, Concordia University,
Montreal, Quebec, Canada}
\email{xiaowen.zhou@concordia.ca } \subjclass[2010]{Primary 60G57,60J25; Secondary 60J35}
\date{\today}
\keywords{generalized stepping stone model, $(\Xi,\text{A},u_{\mathbbm{1}},u_{\mathbbm{2}})$-CPGM, multidimensional Hausdorff moment problem, stationary distribution,
reversibility.}
\begin{abstract}
A generalized stepping stone model with $\Xi$-resampling mechanism is a two dimensional probability-measure-valued stochastic process whose moment dual is similar to that of the classical stepping stone model except that  Kingman's coalescent is replaced by $\Xi$-coalescent.
We prove the existence of  such a process  by specifying its moments  using the dual function-valued $\Xi$-coalescent process with geographical labels and migration, and then verifying a multidimensional Hausdorff moment problem.
We also characterize the stationary distribution of the generalized stepping stone model and show that it is not reversible if the mutation operator is of uniform jump-type.

\end{abstract}
\maketitle \pagestyle{myheadings} \markboth{\textsc{Generalized stepping stone
model}} {\textsc{Stepping stone model}}

\section{Introduction}
Stepping stone model is a probability-measure-valued stochastic process
describing the evolution of relative frequencies
for different types of alleles in a large
population with geographical structure; see Shiga \cite{Shiga801, Shiga802, Shiga82}, Shiga and Uchiyama \cite{ShigaK}
for earlier work on the classical stepping stone model.
Without geographical structure, namely, the whole population lives in single colony, the model degenerates to the classical Fleming-Viot process  (cf. Fleming and Viot  \cite{FV_79}).
Notohara \cite{Notohara} and Handa \cite{Handa_2} introduced the dual process of the classical stepping stone model as a function-valued Markov process involving Kingman's coalescent respectively.

In the past three decades, more general coalescents have been proposed. For example, the $\Lambda$-coalescent (cf. Pitman \cite{Pitman99} and Sagitov \cite{SS_99}) is a coalescent
with possible multiple collisions and the $\Xi$-coalescent (cf. Sagitov \cite{SS_03} and Schweinsberg \cite{Jason-S}) is a coalescent with possible simultaneous multiple collisions.
It is then interesting to know whether there exists a generalized stepping stone model whose dual is
also a function-valued Markov process evolving in the same way as the dual of the classical stepping stone model but with Kingman's coalescent replaced by a more general
coalescent.
As far as we know, such kind of generalization mainly focus on the single colony model; 
see Birkner et al. \cite{MJM}, Donnelly and Kurtz \cite{DK99b}, Li et al. \cite{LLXZ} and the references therein. However, there has been no breakthrough concerning generalization of the stepping stone model.

In this paper, we will consider replacing Kingman's coalescent in the classical stepping stone model with $\Xi$-coalescent and formulate a more generalized two dimensional probability-measure-valued stochastic process,
named as {\it the generalized stepping stone model with $\Xi$-resampling mechanism}. Intuitively, there is a large population of individuals living in two colonies. Each population undergoes reproduction that is described by a $\Xi$-coalescent with simultaneous multiple collisions, i.e., there are prolific individuals that can simultaneously give birth to children of large amounts comparable to the total population. At the same time, each individual is subject to independent mutation described by an operator $A$ and migration to the other colony at a certain rate.
Instead of considering the well-posedness of martingale problem for superprocess, the moment duality plays an explicit role in establishing the existence of the generalized stepping stone model. This is the most important highlight of our work.
An application of such approach has been introduced in  Evans \cite{Eva97}, where the author proved that subject to a weak duality condition, any system of coalescing Borel right processes gives rise to a Feller semigroup arising from the duality consideration.

Adapting this approach, we show that a function-valued Markov process involving $\Xi$-coalescent,
mutation operator $A$, and mutually geographical migration with rates $u_{\mathbbm{1}}$ and $u_{\mathbbm{2}}$, gives rise to a Feller semigroup, which is sufficient to determine the finite dimensional distributions of  the generalized stepping stone model with $\Xi$-resampling mechanism. Compared with \cite{Eva97}, our coalescing process has more general reproduction mechanism involving simultaneous multiple coalescent.
Further, we prove that
the generalized stepping stone model with $\Xi$-resampling mechanism has a unique invariant measure if the mutation process
allows a unique invariant measure.

The reversibility of a population genetic model is an important issue for statistical inference. Li et al. \cite{LSY99} proved the reversibility of the classical Fleming-Viot process with mutation operator of uniform type. Kermany et al. \cite{AXD} discussed the irreversibility of a two-island diffusion model.
Feng et al. \cite{FZ} proved the irreversibility for an interacting classical Fleming-Viot processes with mutation, selection and recombination.
Li et al. \cite{LLXZ} proved that the $\Xi$-Fleming-Viot process is not reversible except for the degenerate case in \cite{LSY99}.
In this paper we also discuss the reversibility for the generalized stepping stone model with $\Xi$-resampling mechanism. Assuming that the mutation operator is of uniform type, we verify that the process is not reversible.

The plan of the rest of the paper is as follows. In Section \ref{sec2}, we extend the
simultaneous multiple $\Xi$-coalescent to a geographically structured
model with mutation and mutual migration, i.e., the {\it $(\Xi,\text{A},u_{\mathbbm{1}},u_{\mathbbm{2}})$-coalescent process with geographical labels and migration} ({$(\Xi,\text{A},u_{\mathbbm{1}},u_{\mathbbm{2}})$-CPGM}),
which serves as the dual process and is described by a
continuous time function-valued Markov process. In Section \ref{sec3},
the generalized stepping stone model is specified by explicitly defining
a transition semigroup that has its associated ``moments" expressed as expectations for a system of  $(\Xi,\text{A},u_{\mathbbm{1}},u_{\mathbbm{2}})$-CPGMs.
Moreover, we discuss the stationary distribution and irreversibility of the generalized stepping stone model with $\Xi$-resampling mechanism in
Sections \ref{sec5} and \ref{sec6}, respectively.

Throughout the paper, we use $\mathbbm{P}\(\cdot\)$ (resp. $\mathbbm{Q}\(\cdot\)$)
to represent both the probability measure and the associated expectation on probability space $(\Omega,\mathcal{F},\mathbbm{P})$
(resp. $(\Omega^{*},\mathcal{F}^*,\mathbbm{Q})$).

\section{$(\Xi,\text{A},u_{\mathbbm{1}},u_{\mathbbm{2}})$-coalescent process with geographical labels and migration}\label{sec2}
In this section, we first give a short review on $\Xi$-coalescent, and then introduce the $\Xi$-coalescent process with
geographical labels and migration ($(\Xi,u_{\mathbbm{1}},u_{\mathbbm{2}})$-CPGM). Moreover, by adding the mutation operator, we construct a  function-valued Markov process,
named as the $\(\Xi,\text{A},u_{\mathbbm{1}},u_{\mathbbm{2}}\)$-CPGM. Its generator and a coupling property are derived at the end.

\subsection{$\Xi$-coalescent}
Put $[n]=\{1,\ldots,n\}$, $[n]_0=\{0,1,\ldots,n\}$, $[\infty]=\{1,2,\ldots\}$ and $[\infty]_0=\{0,1,2,\ldots\}$.
A {\it partition} of $D\subseteq[\infty]$ is a countable collection
$\pi=\{\pi_i, i=1,2,\ldots\}$ of disjoint blocks such that
$\cup_{i}\pi_i=D$ and $\min\pi_i<\min\pi_j$ for $i<j$.
Denote by $|\cdot|$ the cardinality of the corresponding set. Let
$\mathcal{P}_{D}$ be the collection of partitions for any $D\subseteq[\infty]$. In particular, we write
${\mathbf{1}}_{D}$ as the singleton partition for $D$. For example, ${\mathbf 1}_{[n]}=\{\{1\},\ldots,\{n\}\}$.

Given a partition $\pi\in\mathcal{P}_{D}$ with $|\pi|=n$ and
$\pi'\in\mathcal{P}_{[k]}$ with $n\leq k$, the {\it coagulation} of $\pi$ by $\pi'$, denoted by
$\text{Coag}(\pi,\pi')$, is defined as the following partition of
$D$,
\begin{equation}\label{coag}
\text{Coag}(\pi,\pi')=\pi''=\left\{\pi''_j:=\cup_{i\in \pi'_j}\pi_i, j=1,\ldots,|\pi'|\right\}.
\end{equation}

Given a partition $\pi$ with $|\pi|=n$ and a sequence of positive
integers $s,k_1,\ldots,k_r$ such that $k_i\geq 2, i=1,\ldots,r$
and $n=s+\sum_{i=1}^r k_i $, we say a partition $\pi''$ is obtained
by a {\it $(n;k_1,\ldots,k_r,s)$-collision} of $\pi$ if
$\pi''=\text{Coag}(\pi,\pi')$ for some partition $\pi'$ such that
\[\{|\pi'_i|: i=1,\ldots,|\pi'|
\}=\{k_1,\ldots,k_r,k_{r+1},\ldots,k_{r+s}\},\] where
$k_{r+1}=\cdots=k_{r+s}=1$, i.e., $\pi''$ is obtained by mergering the $n$
blocks of $\pi$ into $r+s$ blocks in which $s$ blocks remain
unchanged and the other $r$ blocks contain $k_1,\ldots,k_r$ blocks
from $\pi$, respectively.

The {$\Xi$-coalescent} is a $\cP_{[\infty]}$-valued Markov process
 $\Pi_{[\infty]}=(\Pi_{[\infty]}(t))_{t\geq 0}$ starting from partition
$\Pi_{[\infty]}{(0)}\in \cP_{[\infty]}$ such that
 for any $D\subseteq[\infty]$, its
restriction to $D$, $\Pi_{D}=(\Pi_{D}(t))_{t\geq 0}$ is a Markov chain
and given that $\Pi_{D}(t)$ has $n$ blocks, each
$(n;k_1,\ldots,k_r;s)$-collision occurs at rate
$\la_{n;\,k_1,\ldots,k_r;\,s}$ with
\begin{equation*}
\begin{split}
\la_{n;\,k_1,\ldots,k_r;\,s}
=\int_\Delta \sum_{\ell=0}^s\sum_{i_1\neq
\cdots\neq i_{r+\ell}}{s\choose{\ell}}x_{i_1}^{k_1}\ldots
x_{i_r}^{k_r}x_{i_{r+1}} \ldots x_{i_{r+\ell}}\left(1-\sum_{j=1}^\infty
x_j\right)^{s-\ell} \frac{\Xi(d\mathbf{x})}{\sum_{j=1}^\infty x_j^2},
\end{split}
\end{equation*}
where $\Xi$ is a finite measure on the infinite simplex
\[\Delta=\left\{\mathbf{x}=(x_1,x_2,\ldots): x_1\geq x_2\geq\cdots\geq 0, \,\sum_{i=1}^\infty x_i\leq 1\right\}.\]
There exists at least one $\pi^{'}\in\mathcal{P}_{[n]}$ that induces the $(n;k_1,\ldots,k_r;s)$-collision. We simply write $$\la_{\pi^{'}}:=\la_{n;\,k_1,\ldots,k_r;\,s}.$$
Given there are $n$ blocks, the total coalescence rate is
$$\la_{n}:=\sum_{\pi^{'}\in\mathcal{P}_{[n]}\setminus\mathbf{1}^{[n]}}\la_{\pi^{'}}.$$

\subsection{$(\Xi,u_{\mathbbm{1}},u_{\mathbbm{2}})$-CPGM}
Let $\mathbbm{S}=\{\mathbbm{1}, \mathbbm{2}\}$ be the labels of two colonies.
$\forall\, n\in [\infty]$, $\mathbbm{S}^n$ is the $n$-fold Cartesian product.
For any $\eta\in \cup_{n=1}^{\infty}\mathbbm{S}^n$ and $\mathbbm{i}\in\{\mathbbm{1,2}\}$,
denote by $|\eta|_{\mathbbm{i}}$
the number of coordinates in $\eta$ equal to $\mathbbm{i}$. 
As a consequence we have $|\eta|=|\eta|_{\mathbbm{1}}+|\eta|_{\mathbbm{2}}$.

Given a partition $\pi=\{\pi_1,\pi_2,\ldots,\pi_n\}$  of $D\subseteq[\infty]$ with cardinality $n$ and a label vector   $\eta=\(\eta_1,\eta_2,\ldots,\eta_n\)\in\mathbbm{S}^n$,
a {\it $\eta$-labeled  partition} $\pi$ is defined as
$$\pi^{\eta}=\left\{\pi_1^{\eta_1},\pi_2^{\eta_2},\ldots,\pi_n^{\eta_n}\right\},~~~~~~~~~~~~~$$
where $\pi_{\ell}^{\eta_{\ell}}$ denotes a {\it $\eta_{\ell}$-labeled block}.
Set $\mathbbm{L}\({\pi}^{\eta}\)=\eta$ and  $\mathbbm{L}\({\pi_i}^{\eta_i}\)=\eta_i$ as the labels; $\mathbbm{B}\({\pi}^{\eta}\)=\pi$ and  $\mathbbm{B}\({\pi_i}^{\eta_i}\)=\pi_i$ as the collection of blocks.
For any $\mathbbm{i}\in\{\mathbbm{1,2}\}$, ${\pi^{\eta}}|_{\mathbbm{i}}$ is the
collection of blocks from $\pi^{\eta}$ with label $\mathbbm{i}$. As usual, the blocks in each collection are always ordered by their least elements.
For any $\pi^{'}\in\mathcal{P}_{[k]}$ with $k\geq |\eta|_{\mathbbm{i}}$, put  $$\text{Coag}^{\mathbbm{i}}\(\pi^{\eta},\pi^{'}\)=\text{Coag}\(\pi^{\eta}|_{\mathbbm{i}},\pi^{'}\)\cup\pi^{\eta}|_{\mathbbm{j}}\,\,\text{with}\,\,\mathbbm{j}
=\{\mathbbm{1,2}\}\setminus\mathbbm{i}$$
where Coag$\(\pi^{\eta}|_{\mathbbm{i}},\pi^{'}\)$ is the coagulation of $\pi^{\eta}|_{\mathbbm{i}}$ by $\pi^{'}$, obtained by (\ref{coag})
but keeping the labels unchanged.

\begin{definition}
The $(\Xi,u_{\mathbbm{1}},u_{\mathbbm{2}})$-CPGM 
is a pure jump and labeled partition-valued Markov process.
Given any initial value,
the $\Xi$-coalescences independently take place among blocks with the same label.
Between coalescence times, with rate $u_{\mathbbm{i}}$, one of the blocks with label $\mathbbm{j}$ is randomly sampled with its label  replaced by label $\mathbbm{i}$, where $\{\mathbbm{i,\,j}\}=\{\mathbbm{1,\,2}\}$.
\end{definition}

\subsection{$\(\Xi,\text{A},u_{\mathbbm{1}},u_{\mathbbm{2}}\)\text{-CPGM}$}
Let $E=[0,1]$ be the state space. $\mathcal{E}=\mathscr{B}\(E\)$ is the Borel $\sigma$-algebra on $E$.
Set
\begin{equation*}
\begin{split}
&\mathcal{D}_0=\left\{E,\emptyset\right\};\\
&\mathcal{D}_n=\left\{\left[{i}/{2^n},{\(i+1\)}/{2^n}\right),\,i=0,1,\ldots,2^n-2,\right\}\cup{\big\{\left[{\(2^n-1\)}/{2^n},1\right]\big\} }\,\,\text{for}\,\,n\geq1.
\end{split}
\end{equation*}
Define $\mathcal{D}$ as the ring generated by
$\cup_{n=0}^{\infty}D_n$. Clearly, $\mathcal{D}$ is countable.
Let $\sigma\(\mathcal{D}\)$ be the $\sigma$-algebra generated by $\mathcal{D}$. Subsequently, $\mathcal{E}=\sigma\(\mathcal{D}\)$. Write
$$\mathcal{D}\times\mathcal{D}=\left\{C_i\times D_j |C_i,\,D_j\in\mathcal{D},\,i,j=1,2,\ldots\right\}.$$
One can see that $\sigma\(\mathcal{D}\times\mathcal{D}\)=\sigma\(\mathcal{E}\times\mathcal{E}\)=\mathscr{B}\(E^2\)$.
Let ${B}(E)$ be the real-valued bounded 
functions on $E$ and ${C}(E)$ be the real-valued continuous functions on $E$.
Denote by ${M}_1(E)$ the collection of probability measures on $E$.
${B}(E^n)$, ${C}(E^n)$ and ${M}_1(E^n)$ are defined similarly on $E^n$.

The mutation operator $A$ is a jump-type Feller generator such that
\begin{equation}\label{eq:mu}
Af(x)=\frac{\theta}{2}\int_{E}\(f(y)-f(x)\)q(x,dy),
\end{equation}
where $f(x)\in B(E)$, $\theta>0$ is a constant,  and $q(x,\Gamma)$ is a probability transition function on $E\times \mathcal{E}$. Let $\(T_t\)_{t\geq0}$ be the Feller semigroup associated to $A$.
Denote by $A^{(n)}$ the linear operator on
$B\(E^n\)$ that generates the Feller semigroup $\(T^{(n)}_t\)_{t\geq
0}$ corresponding to $n$ independent copies of the processes
associated  to $(T_t)_{t\geq 0}$. Then
\begin{equation*}
\begin{split}
A^{\(n\)}f(x_1,\ldots,x_n)=\,&\sum_{k=1}^n\frac{\theta}{2}\int_{E}\left[f\(x_1,\ldots,x_{k-1},y,x_{k+1},\ldots,x_{n}\)\right.\\
&\quad\quad\quad\quad\left.-f\(x_1,\ldots,x_{k-1},x_k,x_{k+1},\ldots,x_{n}\)\right]q\(x_k,dy\).
\end{split}
\end{equation*}

Given any ${\eta}=\(\eta_1,\ldots,\eta_n\)\in
{\mathbbm{S}}^{n}$, $\mu=\(\mu_{\mathbbm{1}},\mu_{\mathbbm{2}}\)\in{M}_1(E^2)$ and $f\in B\(E^n\)$, we denote by
\begin{equation}\label{G_{f,u}}
\begin{split}
\mathcal{G}_{f,\,\eta}\(\mu\)&\,:=\,\mathcal{G}_{\mu}\(f,\eta\):=\,\left<\mu_{\eta},f\right>\\
&\,:=\,\int_{E}
\cdots\int_{E}f\(x_1,\ldots,x_n\)\mu_{{\eta}_1}\(dx_1\)\cdots\mu_{{\eta}_n}\(dx_n\).
\end{split}
\end{equation}
In particular, for $f\(x_1,\ldots,x_n\)=\prod_{k=1}^nf_k(x_k)$ with $f_k\in{B}\(E\)$, $k=1,\ldots,n$, we have
$$A^{\(n\)}f(x_1,\ldots,x_n)=\sum_{k=1}^n A f_k\(x_k\)\prod_{\ell\neq k}f_{\ell}\(x_{\ell}\)$$
and
\begin{equation*}\label{seq-fun}
\begin{split}
\mathcal{G}_{\mu}\(f,\eta\)
\,=&\,\prod_{\{k|\eta_k=\mathbbm{1},\,k=1,\ldots,n\}}\<\mu_{\mathbbm{1}},f_k\>\prod_{\{k|\eta_k=\mathbbm{2},\,k=1,\ldots,n\}}\<\mu_{\mathbbm{2}},f_k\>\\
:=&\,\<\mu_{\mathbbm{1}}^{\otimes |\eta|_{\mathbbm{1}}},f^{\mathbbm{1}}\>\<\mu_{\mathbbm{2}}^{\otimes |\eta|_{\mathbbm{2}}},f^{\mathbbm{2}}\>
\end{split}
\end{equation*}
with
\[f^{\mathbbm{i}}=\otimes_{\{k|\eta_k=\mathbbm{i},\,k=1,\ldots,n\}}f_k\(x_k\)\,\,\text{for}\,\,\mathbbm{i}\in\{\mathbbm{1,2}\}.\]

We start to construct a $\cup_{m=1}^{\infty}{B}\(E^m\)\times \mathbbm{S}^m$-valued Markov process $\(Y\(t\),\eta\(t\)\)_{t\geq 0}$ on some probability space $(\Omega,\mathcal{F},\mathbbm{P})$.
Given initial value
$$\(Y(0),\eta\(0\)\)\in B\(E^n\)\times\mathbbm{S}^n,$$
we associate the process with a  $(\Xi,u_{\mathbbm{1}},u_{\mathbbm{2}})$-CPGM whose initial value is denoted by a singleton partition with label $\eta\(0\)$, i.e.,
${\mathbf{1}}_{[n]}^{\eta\(0\)}=\left\{\{1\}^{\eta\(0\)_1},\ldots,\{n\}^{{\eta}\(0\)_{n}}\right\}.$
$\(\eta\(t\)\)_{t>0}$ is defined as the label-valued process of the $(\Xi,u_{\mathbbm{1}},u_{\mathbbm{2}})$-CPGM. It is obvious that the value of $\eta\(t\)$ does not change until either coalescence or migration happens. Let$\{\tau_m\}_{m\geq1}$ be the sequence of jumping times of the associated $(\Xi,u_{\mathbbm{1}},u_{\mathbbm{2}})$-CPGM. Set $\tau_0=0$. $\(Y\(t\)\)_{t\geq 0}$ is formulated recursively. $\forall\,m\in[\infty]_0$,  define
$${Y}(t)=T^{\(|\eta(\tau_m)|\)}_{t-\tau_m}Y(\tau_m)\text{~~for any~~} t\in\left[\tau_m,
\tau_{m+1}\right).$$
At jumping time $\tau_{m+1}$, we need to introduce the operator concerning with the reduction of number of variables if coalescence happens.
For any $\mathbbm{i}\in\{\mathbbm{1,2}\}$ and $\pi\in\mathcal{P}_{[|\eta\(\tau_{m+1-}\)|_{\mathbbm{i}}]}\setminus{\mathbf 1}_{\left[|\eta\(\tau_{m+1-}\)|_{\mathbbm{i}}\right]}$, denote by
$$\pi\(\mathbbm{i}\)=\mathbbm{B}\(\text{Coag}^{\mathbbm{i}}\(\mathbf{1}_{[|\eta\(\tau_{m+1-}\)|]}^{\eta\(\tau_{m+1-}\)},\pi\)\)=\{\pi_{\ell},\ell=1,2,\ldots,|\pi\(\mathbbm{i}\)|\}.$$
A map $\Phi^{\mathbbm{i}}_{\pi}$ from ${B}\(E^{|\eta\(\tau_{m+1-}\)|}\)$ to ${B}\(E^{|\pi(\mathbbm{i})|}\)$ is defined as
$$\Phi^{\mathbbm{i}}_{\pi}g\(x_{1},x_2,\ldots,x_{|\eta\(\tau_{m+1-}\)|}\)=g\(x_{{i}_1},x_{{i}_2},\ldots,x_{{i}_{|\eta\(\tau_{m+1-}\)|}}\)$$
with $i_j=\ell$ for $i_j\in\pi_{\ell}$, $j\in\left[|\eta\(\tau_{m+1-}\)|\right]$.
Then we have
\begin{equation*}
\begin{split}
&\mathbb{P}\(Y\(\tau_{m+1}\)
=\Phi^\mathbbm{i}_{\pi}Y\(\tau_{m+1}-\)|\(\eta\(t\)\)_{0\leq t\leq \tau_{m+1}-}\)\\
&\quad=
\ind_{\left\{|\eta(\tau_{m+1})|_{\mathbbm{i}}<|\eta(\tau_{m+1}-)|_{\mathbbm{i}},\,
\pi\in {\mathcal{P}_{\left[\left|\eta\(\tau_{m+1}-\)\right|_{\mathbbm{i}}\right]}\setminus
\mathbf{1}_{\left[\left|\eta\(\tau_{m+1}-\)\right|_{\mathbbm{i}}\right]}}\right\}}\,\,{\text{with}}\,\,\mathbbm{i}\in\{\mathbbm{1,2}\};\\
&\mathbb{P}\(Y\(\tau_{m+1}\)=Y\(\tau_{m+1}-\)|\(\eta\(t\)\)_{0\leq t\leq \tau_{m+1}-}\)=\ind_{\left\{|\eta(\tau_{m+1})|=|\eta(\tau_{m+1}-)|\right\}},
\end{split}
\end{equation*}
where $\ind_{\omega}$ denotes the indicator function of an
event $\omega$. The first equation is due to the coalescence within colony $\mathbbm{i}$; the second one is due to the migration between the two colonies.

The $\cup_{m=1}^{\infty}{B}\(E^m\)\times {\mathbbm{S}}^m$-valued Markov process $\(Y\(t\),\eta\(t\)\)_{t\geq 0}$ constructed above is called the {\it  $\(\Xi,\text{A},u_{\mathbbm{1}},u_{\mathbbm{2}}\)\text{-CPGM}$}.

\begin{proposition}\label{ge-pro}
Given $\(Y(0),\eta(0)\)=\(f,\eta\)\in\cup_{m=1}^{\infty}{B}\(E^m\)\times {\mathbbm{S}}^m$, $\forall$ $\mu\in M_1\(E^2\)$, the generator $\mathcal{L}$ of the $\(\Xi,\text{A},u_{\mathbbm{1}},u_{\mathbbm{2}}\)$-CPGM $\(Y\(t\),\eta\(t\)\)_{t\geq0}$ is given by
{\rm
\begin{equation}\label{generator-dual}
\begin{split}
&\mathcal{L}\mathcal{G}_{\mu}\(f,\eta\)\\
&=\mathcal{G}_{\mu}\(A^{\(|\eta|\)}f,\eta\)+\sum_{\mathbbm{i}\in\{\mathbbm{1,2}\}}\sum_{\pi\in\mathcal{P}_{\left[|\eta|_{\mathbbm{i}}\right]}\setminus\mathbf{1}^{\left[|\eta|_{\mathbbm{i}}\right]}}
\la_{\pi}\(\mathcal{G}_{\mu}\(\Phi_{\pi}^{\mathbbm{i}}f,{\beta}_{\pi}^{\mathbbm{i}}\(\eta\)\)-\mathcal{G}_{\mu}\(f,\eta\)\)\\
&\qquad+\sum_{\{\mathbbm{i,\,j}\}=\{\mathbbm{1,2}\}}\sum_{\{k|\eta_k={\mathbbm{j}},\,k=1,\ldots,|\eta|\}}u_{\mathbbm{i}}
\(\mathcal{G}_{\mu}\(f,{\gamma}_{k,\,\mathbbm{i}}\(\eta\)\)-\mathcal{G}_{\mu}\(f,\eta\)\),
\end{split}
\end{equation}
}
where
{\rm
\begin{eqnarray*}
\begin{split}
&\be^{\mathbbm{i}}_{\pi}\(\eta\)=\mathbbm{L}\({\text{Coag}}^{\mathbbm{i}}\({\mathbf 1}_{[|\eta|]}^{\eta},\pi\)\)\text{\,for any\,}\pi\in \mathcal{P}_{\left[|\eta|_{\mathbbm{i}}\right]}\setminus{\mathbf 1}_{\left[|\eta|_{\mathbbm{i}}\right]};\\
&\ga_{k,\,\mathbbm{i}}\(\eta\)=\(\eta_1,\ldots,\eta_{k-1},\mathbbm{i},\eta_{k+1},\ldots,\eta_{|\eta|}\),\,\, 1\leq k\leq |\eta|.
\end{split}
\end{eqnarray*}
}

\end{proposition}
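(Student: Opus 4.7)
The plan is to read $\mathcal{L}\mathcal{G}_{\mu}(f,\eta)$ directly off the construction of $(Y(t),\eta(t))$ by decomposing the infinitesimal dynamics into the three ingredients built into the definition: the deterministic mutation flow that governs $Y$ between jumps, the within-colony $\Xi$-coalescences, and the migration flips of labels. With $n:=|\eta|<\infty$, neither coalescence nor migration can increase the block count, so the total initial jump rate is bounded above by $\la_{n}+n(u_{\mathbbm{1}}+u_{\mathbbm{2}})<\infty$. In particular $\tau_1$ is exponentially distributed, and the standard decomposition
$$\mathcal{L}F(f,\eta)=\lim_{t\downarrow 0}\frac{1}{t}\bigl(\mathbbm{P}[F(Y(t),\eta(t))]-F(f,\eta)\bigr)$$
splits as a continuous (mutation) part plus a sum over admissible first jumps weighted by their rates.

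For the mutation term I will use that on $[0,\tau_1)$ the vector $\eta$ is frozen while $Y(t)=T^{(n)}_{t}f$. Since $\mathcal{G}_{\mu}(\cdot,\eta)$ is a bounded linear functional on $B(E^{n})$ for fixed $\mu$ and $\eta$, and $A^{(n)}$ is the Feller generator of $(T^{(n)}_t)_{t\geq 0}$, differentiating at $t=0$ contributes exactly $\mathcal{G}_{\mu}(A^{(n)}f,\eta)$. For the jump terms I will enumerate the possible first jumps using the definition of the $(\Xi,u_{\mathbbm{1}},u_{\mathbbm{2}})$-CPGM together with the recursion for $Y(\tau_{m+1})$: each $\pi\in\mathcal{P}_{[|\eta|_{\mathbbm{i}}]}\setminus{\mathbf 1}_{[|\eta|_{\mathbbm{i}}]}$ triggers a within-colony coalescence in $\mathbbm{i}\in\{\mathbbm{1},\mathbbm{2}\}$ at rate $\la_{\pi}$, sending $(f,\eta)$ to $(\Phi^{\mathbbm{i}}_{\pi}f,\be^{\mathbbm{i}}_{\pi}(\eta))$; each coordinate $k$ with $\eta_k=\mathbbm{j}$ flips to label $\mathbbm{i}$ at rate $u_{\mathbbm{i}}$, carrying $\eta$ to $\ga_{k,\mathbbm{i}}(\eta)$ and leaving $f$ unchanged. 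Substituting these jumps and rates into the standard pure-jump generator formula produces the second and third summations in \eqref{generator-dual}.

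The main bookkeeping step is verifying that the post-coalescence moment functional actually equals $\mathcal{G}_{\mu}(\Phi^{\mathbbm{i}}_{\pi}f,\be^{\mathbbm{i}}_{\pi}(\eta))$ in the notation of (\ref{G_{f,u}}) and (\ref{seq-fun}). I will check this first on product functions $f=\prod_{k}f_{k}$, where the factorization (\ref{seq-fun}) splits $\mathcal{G}_{\mu}(f,\eta)$ into two independent iterated integrals, one against $\mu_{\mathbbm{1}}^{\otimes|\eta|_{\mathbbm{1}}}$ and one against $\mu_{\mathbbm{2}}^{\otimes|\eta|_{\mathbbm{2}}}$. Because $\Phi^{\mathbbm{i}}_{\pi}$ identifies variables only within colony $\mathbbm{i}$ and $\be^{\mathbbm{i}}_{\pi}$ rewrites only the corresponding labels, the colony-$\mathbbm{j}$ factor is untouched, and the identity reduces to an elementary relabeling of the integration variables in the remaining factor, namely merging the coordinates indexed by a common block of $\pi$ into the single coordinate attached to the corresponding coagulated block. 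Extension to general $f\in B(E^{n})$ then follows by linearity and a monotone class argument using the algebra of product indicator functions built from $\mathcal{D}\times\mathcal{D}$. I expect this consistency between the combinatorial maps $\Phi^{\mathbbm{i}}_{\pi}$, $\be^{\mathbbm{i}}_{\pi}$ and the analytic factorization of $\mathcal{G}_{\mu}$ to be the main obstacle, since the rest is the standard generator computation for a Feller-driven pure-jump Markov process with finitely many blocks.
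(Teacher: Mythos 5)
Your proposal is correct and is essentially the argument the paper intends: the paper's proof is the one-line remark that the formula follows from the construction, and your decomposition into the mutation flow between jumps, the within-colony coalescence jumps at rates $\la_{\pi}$, and the label flips at rates $u_{\mathbbm{i}}$ is exactly that construction read off as a generator. The only stylistic difference is that your monotone class extension for the post-coalescence identity is more machinery than needed, since $\mathcal{G}_{\mu}\(\Phi^{\mathbbm{i}}_{\pi}f,\be^{\mathbbm{i}}_{\pi}\(\eta\)\)$ equals the coagulated moment functional for arbitrary bounded $f$ by a direct change of integration variables.
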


\begin{proof}
The result is easily obtained from the construction of the $\(\Xi,\text{A},u_{\mathbbm{1}},u_{\mathbbm{2}}\)$-CPGM.
\end{proof}

Next we present a coupling property for the $(\Xi,\text{A}, u_{\mathbbm{1}},u_{\mathbbm{2}})$-CPGM. It implies that the impact of a  $\(\Xi,\text{A},u_{\mathbbm{1}},u_{\mathbbm{2}}\)$-CPGM is the same as that of superposition of countable infinitely many  $\(\Xi,\text{A},u_{\mathbbm{1}},u_{\mathbbm{2}}\)$-CPGMs as long as they are all driven by a  $\(\Xi,u_{\mathbbm{1}},u_{\mathbbm{2}}\)$-CPGM
and the superposition of initial values coincides with that of  the single process.

\begin{proposition}\label{coupling property}
Let $\(Y_i\(t\),\eta\(t\)\)_{t\geq 0},\, i\in[\infty]$ be a sequence of $\(\Xi,\text{A},u_{\mathbbm{1}},u_{\mathbbm{2}}\)$-CPGMs that are driven by a  $(\Xi,u_{\mathbbm{1}},u_{\mathbbm{2}})$-CPGM.
Given any $\mu\in{M}_1(E^2)$, $\forall$ $t\geq0$, if
\begin{equation*}
\sum_{i=1}^{\infty}\mid\mathcal{G}_{\mu}\(Y_i\(t\),\eta\(t\)\)\mid<\infty, 
\end{equation*}
one can see that
\begin{equation*}
\sum_{i=1}^{\infty}\mathcal{G}_{\mu}\(Y_i\(t\),\eta\(t\)\)=\mathcal{G}_{\mu}\(Y\(t\),\eta\(t\)\),
\end{equation*}
where $\(Y\(t\),\eta\(t\)\)$ is also a $\(\Xi,\text{A},u_{\mathbbm{1}},u_{\mathbbm{2}}\)$-CPGM driven by the same $(\Xi,u_{\mathbbm{1}},u_{\mathbbm{2}})$-CPGM with initial value $Y\(0\)=\sum_{i=1}^{\infty}Y_i\(0\)$. 
\end{proposition}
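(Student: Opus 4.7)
The plan is to exploit the linearity of every operation in the construction of the $(\Xi,\text{A},u_{\mathbbm{1}},u_{\mathbbm{2}})$-CPGM. Since all the processes $(Y_i(t),\eta(t))$ and $(Y(t),\eta(t))$ are driven by the same $(\Xi,u_{\mathbbm{1}},u_{\mathbbm{2}})$-CPGM, they share a common label process $\eta(t)$ and a common sequence of jump times $\{\tau_m\}_{m\geq 1}$, with identical coalescence/migration events at each $\tau_{m+1}$. The argument proceeds by showing first a pathwise identity $Y(t)=\sum_{i=1}^\infty Y_i(t)$ in $B(E^{|\eta(t)|})$, and then integrating against $\mu_{\eta(t)}$.

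First I would verify the pathwise identity by induction on $m$. The base case is the assumption $Y(0)=\sum_{i=1}^\infty Y_i(0)$. For the inductive step on $[\tau_m,\tau_{m+1})$, each $Y_i$ satisfies $Y_i(t)=T^{(|\eta(\tau_m)|)}_{t-\tau_m}Y_i(\tau_m)$ and so does $Y$, so linearity of the Feller semigroup $T^{(n)}_t$ transports the identity forward in time. At the jump time $\tau_{m+1}$, either a coalescence occurs and $Y(\tau_{m+1})=\Phi^{\mathbbm{i}}_{\pi}Y(\tau_{m+1}-)$, or a migration occurs and $Y(\tau_{m+1})=Y(\tau_{m+1}-)$; since $\Phi^{\mathbbm{i}}_{\pi}$ is a pure variable-substitution map, hence linear, the identity is preserved across the jump. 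By induction, $Y(t)=\sum_{i=1}^\infty Y_i(t)$ pointwise for every $t\ge 0$.

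Next, using the explicit form of $\mathcal{G}_\mu$ in (\ref{G_{f,u}}), which is an integral against the product measure $\mu_{\eta(t)_1}\otimes\cdots\otimes\mu_{\eta(t)_{|\eta(t)|}}$ and hence linear in the first argument, one interchanges summation and integration via Fubini/dominated convergence, justified precisely by the absolute-convergence hypothesis $\sum_i|\mathcal{G}_\mu(Y_i(t),\eta(t))|<\infty$. This yields
\begin{equation*}
\mathcal{G}_\mu(Y(t),\eta(t))=\mathcal{G}_\mu\!\left(\sum_{i=1}^\infty Y_i(t),\eta(t)\right)=\sum_{i=1}^\infty\mathcal{G}_\mu(Y_i(t),\eta(t)),
\end{equation*}
which is the desired identity.

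The main delicate point is not the algebra of linearity but the measure-theoretic well-definedness: one must check that the pointwise infinite sum $\sum_{i=1}^\infty Y_i(0)$ really lies in $B(E^{|\eta(0)|})$ so that the process $(Y(t),\eta(t))$ is meaningfully constructed by the recipe preceding Proposition \ref{ge-pro}, and that the same holds at every later time along the (almost surely finite per bounded interval) sequence $\{\tau_m\}$. Once this well-definedness is granted, the rest is a routine application of linearity plus dominated convergence.
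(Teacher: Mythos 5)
Your proposal is correct and follows essentially the same route as the paper: a pathwise induction over the jump times $\{\tau_m\}$ using linearity of the semigroup $T^{(n)}_t$ and of the substitution maps $\Phi^{\mathbbm{i}}_{\pi}$, followed by linearity of $\mathcal{G}_\mu$ in its function argument. Your added remarks on interchanging the infinite sum with the integral and on the well-definedness of $\sum_i Y_i(0)$ in $B\(E^{|\eta(0)|}\)$ are points the paper passes over silently, so they are a welcome refinement rather than a departure.
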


\begin{proof}
For any $s\in[0,\tau_1)$, one can see that
\begin{equation*}
\sum_{i=1}^{\infty}Y_i\(s\)=\sum_{i=1}^{\infty}T_s^{\otimes |\eta\(0\)|}Y_i\(0\)=T_s^{\otimes |\eta\(0\)|}Y\(0\)=Y\(s\).
\end{equation*}
Further, if $\tau_1$ is a migration time, then
\begin{equation*}
\sum_{i=1}^{\infty}Y_i\(\tau_1\)=\sum_{i=1}^{\infty}Y_i\(\tau_1-\)=Y\(\tau_1-\)=Y\(\tau_1\);
\end{equation*}
otherwise, $\tau_1$ is a coalescence time. Since the deduction of variables in $Y_i\(\tau_1-\)$, $i\in[\infty]$ and $Y\(\tau_1-\)$ is driven by the same coalescence event. Subsequently, for any $\Phi^\mathbbm{i}_{\pi}$ with $\mathbbm{i}\in\{\mathbbm{1,2}\}$ and $\pi\in  \mathcal{P}_{\left[|\eta\(\tau_1-\)|_{\mathbbm{i}}\right]}\setminus{\mathbf 1}_{\left[|\eta\(\tau_1-\)|_{\mathbbm{i}}\right]}$, we still have
\begin{equation*}
\sum_{i=1}^{\infty}Y_i\(\tau_1\)=\sum_{i=1}^{\infty}\Phi^{\mathbbm{i}}_{\pi}Y_i\(\tau_1-\)
=\Phi^{\mathbbm{i}}_{\pi}\(\sum_{i=1}^{\infty}Y_i\(\tau_1-\)\)=\Phi^{\mathbbm{i}}_{\pi}Y\(\tau_1-\)=Y\(\tau_1\).
\end{equation*}
By mathematical induction on the jumping times, one can see that $\sum_{i=1}^{\infty}Y_i\(t\)=Y\(t\)$ holds for any $t\geq 0$.
Then the desired result follows from (\ref{G_{f,u}}).

\end{proof}

\section{Generalized stepping stone model with $\Xi$-resampling mechanism}\label{sec3}
In this section, we formulate {\it the generalized stepping stone model with $\Xi$-resampling mechanism} to describe the gene frequencies for a population model with two colonies.
We start with introducing the multidimensional Hausdorff moment problem.
Then, subject to moment duality, the transition semigroup of the generalized stepping stone model with $\Xi$-resampling mechanism is uniquely defined from its dual $\(\Xi,\text{A},u_{\mathbbm{1}},u_{\mathbbm{2}}\)$-CPGM. As a consequence, the transition semigroup with initial value determines the finite dimensional distributions of the generalized stepping stone model.

\subsection{Multidimensional Hausdorff moment problem}
The lemma below provides some necessary and sufficient conditions for the solution to multidimensional Hausdorff moment problem.

\begin{lemma}[cf. Proposition 4.6.11 in Berg et al. \cite{Berg}]\label{Haus}
Given any $k\in[\infty]$,  for a function $\psi:[\infty]_0^k\rightarrow\mathbb{R}$, the following conditions are equivalent:
\begin{enumerate}[(i)]
\item $\psi$ is completely monotone;
\item
$$\sum_{\mathbf{0}\leq \mathbf{p}\leq \mathbf{n}}\(-1\)^{\parallel\mathbf{p}\parallel}{\mathbf{n}\choose\mathbf{p}}
\psi\(\mathbf{m}+\mathbf{p}\)\geq0\text{~~~~~~for all~~}\mathbf{n},\mathbf{m}\in[\infty]_0^k,$$
where {$\bf{\leq}$} denotes the usual coordinatewise partial order on $[\infty]_0^{k}$,
$\mathbf{n}=\(n_1,\ldots,n_k\)$, $\mathbf{m}=\(m_1,\ldots,m_{k}\)$, $\mathbf{p}=\(p_1,\ldots,p_k\)$,
$\parallel\mathbf{p}\parallel=p_1+\cdots+p_k$  and
\[{\mathbf{n}\choose\mathbf{p}}=\prod_{i=1}^{k}{{n_i\choose p_i}};\]
\item There exists $\mu\in{M}_{+}\([0,1]^k\)$ such that
$$\psi(\mathbf{n})=\int_{[0,1]^k}\mathbf{x}^{\mathbf{n}}d\mu\({\mathbf{x}}\),~~~~{\mathbf{n}\in[\infty]_0^{k}},$$
where ${M}_{+}\(X\)$ is the collection of Radon measures on $X$.
\end{enumerate}
\end{lemma}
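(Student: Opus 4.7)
The plan is to establish the cycle (iii) $\Rightarrow$ (ii) $\Leftrightarrow$ (i) $\Rightarrow$ (iii). The first two links are essentially bookkeeping, while (ii) $\Rightarrow$ (iii) is the substantive content of the multidimensional Hausdorff moment theorem; I would attack it by adapting the classical Bernstein-polynomial construction of the representing measure.

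For (iii) $\Rightarrow$ (ii), I would substitute the integral representation into the alternating sum and exploit the factorization
\[
\sum_{\mathbf{0}\leq\mathbf{p}\leq\mathbf{n}}(-1)^{\parallel\mathbf{p}\parallel}\binom{\mathbf{n}}{\mathbf{p}}\mathbf{x}^{\mathbf{m}+\mathbf{p}}=\mathbf{x}^{\mathbf{m}}\prod_{i=1}^{k}(1-x_{i})^{n_{i}}\geq 0\quad\text{on }[0,1]^{k},
\]
so integrating against $\mu\in{M}_{+}([0,1]^{k})$ yields (ii). The equivalence (i) $\Leftrightarrow$ (ii) is a definitional unfolding: complete monotonicity on $[\infty]_{0}^{k}$ is the statement that every iterated composition of the forward difference operators is nonnegative, and expanding $\prod_{i=1}^{k}(I-E_{i})^{n_{i}}\psi(\mathbf{m})$ by the binomial theorem reproduces the alternating sum in (ii).

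For (ii) $\Rightarrow$ (iii), I would construct for every multi-index $\mathbf{N}\in[\infty]_{0}^{k}$ with strictly positive entries the discrete measure on the dyadic grid $\{\mathbf{k}/\mathbf{N}:\mathbf{0}\leq\mathbf{k}\leq\mathbf{N}\}\subset[0,1]^{k}$ given by
\[
\mu_{\mathbf{N}}=\sum_{\mathbf{0}\leq\mathbf{k}\leq\mathbf{N}}\binom{\mathbf{N}}{\mathbf{k}}\Bigl(\sum_{\mathbf{0}\leq\mathbf{p}\leq\mathbf{N}-\mathbf{k}}(-1)^{\parallel\mathbf{p}\parallel}\binom{\mathbf{N}-\mathbf{k}}{\mathbf{p}}\psi(\mathbf{k}+\mathbf{p})\Bigr)\delta_{\mathbf{k}/\mathbf{N}}.
\]
Hypothesis (ii) makes each weight nonnegative, so $\mu_{\mathbf{N}}\in{M}_{+}([0,1]^{k})$; a Vandermonde-type double sum (swap the order of summation, then apply $\sum_{\mathbf{k}\leq\mathbf{j}}(-1)^{\parallel\mathbf{j}-\mathbf{k}\parallel}\binom{\mathbf{j}}{\mathbf{k}}=\mathbbm{1}_{\{\mathbf{j}=\mathbf{0}\}}$) shows that the total mass of $\mu_{\mathbf{N}}$ equals $\psi(\mathbf{0})$, independently of $\mathbf{N}$. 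This gives a uniformly bounded family on the compact cube $[0,1]^{k}$, automatically tight, from which Helly's selection theorem produces a subsequence converging weakly to some $\mu\in{M}_{+}([0,1]^{k})$. To match moments, I would expand the monomial $\mathbf{x}^{\mathbf{n}}$ in the Bernstein basis $\mathbf{x}^{\mathbf{k}}(1-\mathbf{x})^{\mathbf{N}-\mathbf{k}}$, compute $\int\mathbf{x}^{\mathbf{n}}\,d\mu_{\mathbf{N}}$ explicitly, and let $\mathbf{N}\to\infty$ along the chosen subsequence.

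The main obstacle will be the last identification. Weak convergence transports $\int\mathbf{x}^{\mathbf{n}}\,d\mu_{\mathbf{N}}$ to $\int\mathbf{x}^{\mathbf{n}}\,d\mu$, but the former equals $\psi(\mathbf{n})$ only up to an $O(1/\min_{i}N_{i})$ correction of a combinatorial nature. A cleaner alternative, and the route I would actually write up, is to define a linear functional $L$ on the polynomial algebra on $[0,1]^{k}$ by $L(\mathbf{x}^{\mathbf{n}})=\psi(\mathbf{n})$, use multivariate Bernstein approximation together with (ii) to show $L\geq 0$ on every polynomial that is nonnegative on $[0,1]^{k}$, and then invoke a Riesz--Markov type extension to produce the desired Radon measure $\mu$. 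Both routes are encoded in the proof of Berg--Christensen--Ressel's Proposition~4.6.11 that is cited here.
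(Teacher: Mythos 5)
The paper offers no proof of this lemma; it is imported wholesale as Proposition~4.6.11 of Berg--Christensen--Ressel, so there is nothing in the paper to compare your argument against. Your outline is the standard proof of the multidimensional Hausdorff moment theorem and is sound: (iii) $\Rightarrow$ (ii) via the factorization $\sum_{\mathbf{0}\leq\mathbf{p}\leq\mathbf{n}}(-1)^{\|\mathbf{p}\|}\binom{\mathbf{n}}{\mathbf{p}}\mathbf{x}^{\mathbf{m}+\mathbf{p}}=\mathbf{x}^{\mathbf{m}}\prod_{i}(1-x_i)^{n_i}\geq 0$; (ii) $\Rightarrow$ (iii) via the nonnegative Bernstein-grid measures $\mu_{\mathbf{N}}$ of total mass $\psi(\mathbf{0})$, weak-$*$ compactness, and either the $O(1/\min_i N_i)$ moment correction or the positive-functional/Riesz--Markov variant, both of which you correctly identify as the only delicate point. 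The one place where a word more is needed is your claim that (i) $\Leftrightarrow$ (ii) is a ``definitional unfolding'': complete monotonicity in the sense of Berg et al.\ requires nonnegativity of iterated differences $\nabla_{a_1}\cdots\nabla_{a_n}\psi$ along \emph{arbitrary} elements $a_i$ of the semigroup $[\infty]_0^k$, not just the generators $e_1,\dots,e_k$, so deducing (i) from (ii) needs the (easy) reduction $\nabla_{a+b}=\nabla_a+E_a\nabla_b$, which expresses a general iterated difference as a nonnegative combination of shifted generator differences.
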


\subsection{Generalized stepping stone model with $\Xi$-resampling mechanism}
Denote by
$\(C\(M_1\(E^2\)\),\|\cdot\|\)$ the Banach space of continuous functions on $M_1\(E^2\)$
with norm $\|\mathbbm{F}\|=\sup_{\mu\in M_1\(E^2\)}|\mathbbm{F}\(\mu\)|$. 
In order to formulate a measure-valued stochastic process, we need to introduce some subspace of $C\(M_1\(E^2\)\)$ serving as the core for the generator.
Denote by $C_p\(M_1\(E^2\)\)$  the linear span of monomials of the form
\begin{equation*}
\mathbb{F}_{f,\,\eta,\,n}\(\mu\)=\int_{E}\cdots\int_{E}f\(x_1,\ldots,x_n\)
\prod_{i=1}^n\mu_{\eta_i}\(dx_i\)
\end{equation*}
with $n\in[\infty]$, $f\(x_1,\ldots,x_n\)=f_1\(x_1\)\cdots f_n\(x_n\)\in C\(E\)^{n}$
 and $\eta=\(\eta_1,\ldots,\eta_n\)\in\mathbbm{S}^n$. It follows from the Stone-Weierstrass Theorem 
that ${C}_p({M}_1\(E^2\))$ is dense in ${C}({M}_1\(E^2\))$.
Let $B_{b,\,p}({M}_1\(E^2\))$ be the collection of those $\mathbb{F}_{f,\,\eta,\,n}\(\mu\)$ with $n\in[\infty]$, $f\in{B}\(E^n\)$
and $\eta\in\mathbbm{S}^n$. Denote by $\bar{B}_{b,\,p}\({M}_1\(E^2\)\)$ the closure of ${B}_{b,\,p}\({M}_1\(E^2\)\)$. $B\(M_1\(E^2\)\)$ is the collection of bounded functions on $M_1\(E^2\)$. Clearly, we have ${C}_p({M}_1\(E^2\))\subseteq B_{b,\,p}({M}_1\(E^2\))\subseteq\bar{B}_{b,\,p}\({M}_1\(E^2\)\)\subseteq B\(M_1\(E^2\)\)$.
Throughtout this paper, we make the following assumptions:
\begin{enumerate}[(I)]
\item A finite measure $\Xi$ on 
$\Delta$ characterizes the simultaneous multiple coalescent;
\item The mutation operator $A$ is a jump-type Feller generator given by (\ref{eq:mu});
\item The migration rates $u_{\mathbbm 1}$ and $u_{\mathbbm 2}$ are positive.
\end{enumerate}

\begin{proposition}\label{cor1}
For each $\mu=\(\mu_{\mathbbm{1}},\mu_{\mathbbm{2}}\)\in M_1\(E^2\)$ and $t\geq0$,
there exists a $M_1\(E^2\)$-valued random variable $V_{\mu,t}$ on some complete probability space $\(\Omega^{*},\mathcal{F}^{*},\mathbbm{Q}\)$
such that for all $k$, $\ell\in[\infty]$ with $k+\ell>0$,
$\mathbf{n_1}=(n_{11}, \ldots, n_{1k})\in\left[\infty\right]_0^k$, $\mathbf{n_2}=(n_{21}, \ldots, n_{2\ell})\in\left[\infty\right]_0^{\ell}$,
$\{C_i\times D_j\}_{i\in [k],\,j\in[\ell]}\subseteq\mathcal{D}\times\mathcal{D}$, we have
{\rm
\begin{equation*}
\begin{split}
&\mathbbm{Q}\left[V_{\mu,\,t}\(\otimes_{i=1}^k\ind_{C_i}^{\otimes n_{1i}}\otimes_{i=1}^{\ell}\ind_{D_i}^{\otimes n_{2i}}\)\right]
=\mathbbm{P}\left[\mathcal{G}_{\mu}\(Y_{\mathbf{n_1,\,n_2}}\(t\),\eta_{\mathbf{n_1,\,n_2}}\(t\)\)\right],
\end{split}
\end{equation*}
}
where $\(Y_{\mathbf{n_1,\,n_2}}\(t\),\eta_{\mathbf{n_1,\,n_2}}\(t\)\)_{t\geq0}$ is a $\(\Xi,\text{A},u_{\mathbbm{1}},u_{\mathbbm{2}}\)$-CPGM with initial value
{\rm$$ Y_{\mathbf{n_1,\,n_2}}\(0\)=\otimes_{i=1}^k\ind_{C_i}^{\otimes n_{1i}}\otimes_{i=1}^{\ell}\ind_{D_i}^{\otimes n_{2i}}\,\,\,\,\text{and}\,\,\,\,\eta_{\mathbf{n_1,\,n_2}}\(0\)=\{\underbrace{\mathbbm{1},\ldots,\mathbbm{1}}_{n_1},\underbrace{\mathbbm{2},\ldots,\mathbbm{2}}_{n_2}\}.$$
}

\end{proposition}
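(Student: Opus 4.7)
The plan is to invoke the multidimensional Hausdorff moment problem (Lemma \ref{Haus}) to realize the candidate function
\[
\psi(\mathbf{n_1},\mathbf{n_2}) \,:=\, \mathbb{P}\bigl[\mathcal{G}_\mu\bigl(Y_{\mathbf{n_1},\mathbf{n_2}}(t),\,\eta_{\mathbf{n_1},\mathbf{n_2}}(t)\bigr)\bigr]
\]
as the mixed moments of a random vector in $[0,1]^{k+\ell}$, and then to assemble the resulting finite-dimensional distributions (as $C_i,D_j$ range over the countable ring $\mathcal{D}$) into the $M_1(E^2)$-valued random variable $V_{\mu,t}$ via a Kolmogorov-type consistency argument.

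The core step is to verify complete monotonicity of $\psi$, i.e., condition (ii) of Lemma \ref{Haus}. To this end, construct a single $(\Xi,A,u_{\mathbbm{1}},u_{\mathbbm{2}})$-CPGM $(\tilde Y(t),\tilde\eta(t))_{t\geq 0}$ starting from
\[
\tilde Y(0) \,=\, \otimes_{i=1}^k \bigl[\ind_{C_i}^{\otimes m_{1i}}\otimes(1-\ind_{C_i})^{\otimes n_{1i}}\bigr] \otimes_{j=1}^\ell \bigl[\ind_{D_j}^{\otimes m_{2j}}\otimes(1-\ind_{D_j})^{\otimes n_{2j}}\bigr]
\]
on $E^N$ with $N=\|\mathbf{m}\|+\|\mathbf{n}\|$, and with $\tilde\eta(0)$ labelling the first $\|\mathbf{m_1}\|+\|\mathbf{n_1}\|$ slots as $\mathbbm{1}$ and the remaining slots as $\mathbbm{2}$. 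Since every factor in $\tilde Y(0)$ is a nonnegative function, and since the CPGM operations $T_t^{(\cdot)}$, $\Phi_\pi^{\mathbbm{i}}$, and the migration relabellings all preserve nonnegativity, one has $\tilde Y(t)\geq 0$ almost surely and hence $\mathbb{P}[\mathcal{G}_\mu(\tilde Y(t),\tilde\eta(t))]\geq 0$. Expanding each $(1-\ind_{C_i})^{\otimes n_{1i}}$ and $(1-\ind_{D_j})^{\otimes n_{2j}}$ multilinearly writes $\tilde Y(0)=\sum_{\alpha}(-1)^{|\alpha|}Z_\alpha(0)$, where each $Z_\alpha(0)$ is a pure tensor product on $E^N$ of indicators and the constant function $1$. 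Proposition \ref{coupling property}, applied with all the $Z_\alpha$ driven by the same underlying $(\Xi,u_{\mathbbm{1}},u_{\mathbbm{2}})$-CPGM, then gives $\mathcal{G}_\mu(\tilde Y(t),\tilde\eta(t))=\sum_\alpha(-1)^{|\alpha|}\mathcal{G}_\mu(Z_\alpha(t),\tilde\eta(t))$. Grouping by the \emph{type} $\mathbf{p}$---the vector counting indicator slots among each group of expanded $n_{si}$ coordinates---and using permutation symmetry of the CPGM dynamics among equally labelled slots, one obtains $\binom{\mathbf n}{\mathbf p}$ identically distributed contributions for each type $\mathbf p$.

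The crucial identification is
\[
\mathbb{P}\bigl[\mathcal{G}_\mu(Z_\alpha(t),\tilde\eta(t))\bigr]\,=\,\psi(\mathbf{m}+\mathbf{p}) \quad\text{whenever the type of } \alpha \text{ is } \mathbf{p},
\]
which converts the alternating sum into the Hausdorff quantity $\sum_{\mathbf p}(-1)^{\|\mathbf p\|}\binom{\mathbf n}{\mathbf p}\psi(\mathbf m+\mathbf p)\geq 0$. Two facts underlie this reduction: (a) the constant-$1$ slots in $Z_\alpha$ are inert for $\mathcal{G}_\mu$, because $T_t 1=1$, $A\cdot 1=0$, any $\Phi^{\mathbbm{i}}_\pi$ involving a $1$-slot absorbs it into its coalescing partner without changing the block function (since $1\cdot f=f$), and $\int 1\,d\mu_\bullet=1$; and (b) the sampling consistency of the $\Xi$-coalescent, namely that the restriction of a $\Xi$-coalescent on $N$ lineages to any fixed subset of size $\|\mathbf m\|+\|\mathbf p\|$ is again a $\Xi$-coalescent on that subset, which together with the per-lineage independence of mutation and migration implies that the joint law of the block structure, labels, and functions carried by the non-$1$-slots in $Z_\alpha(t)$ coincides with that of $(Y_{\mathbf m+\mathbf p}(t),\eta_{\mathbf m+\mathbf p}(t))$. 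Once complete monotonicity is in hand, Lemma \ref{Haus}(iii) supplies a measure on $[0,1]^{k+\ell}$ with the prescribed moments---a probability measure since $\psi(\mathbf 0)=1$---and consistency of the family as $(C_i,D_j)$ vary in $\mathcal D$ together with countable additivity (obtained by continuity from above on $\mathcal D$ and dominated convergence at the level of moments) yields $V_{\mu,t}\in M_1(E^2)$. The main obstacle is the reduction (a)--(b), which requires delicate bookkeeping to reconcile the $N$-dimensional padded process with the $(\|\mathbf m\|+\|\mathbf p\|)$-dimensional $(\Xi,A,u_{\mathbbm{1}},u_{\mathbbm{2}})$-CPGM, leveraging the exchangeability and sampling consistency of the $\Xi$-coalescent.
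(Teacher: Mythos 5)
Your proposal is correct and follows essentially the same route as the paper: verify condition (ii) of Lemma \ref{Haus} by realizing the alternating Hausdorff sum as the expectation of a single $\(\Xi,\text{A},u_{\mathbbm{1}},u_{\mathbbm{2}}\)$-CPGM started from the nonnegative tensor product of $\ind_{C_i}$ and $\ind-\ind_{C_i}$ factors, using the superposition/coupling of Proposition \ref{coupling property} and the inertness of constant-$\ind$ slots, and then invoke Lemma \ref{Haus}(iii). The only difference is direction of presentation (you expand the nonnegative product forward; the paper recombines the alternating sum backward), and your explicit justification of the padded-versus-unpadded identification via sampling consistency is a point the paper leaves implicit.
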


\begin{proof}
Given that $Y_{\mathbf{n_1,\,n_2}}\(0\)$ is in the form of tensor product of one variable functions, one can see that $Y_{\mathbf{n_1,\,n_2}}\(t\)$ is always in a similar form of one variable functions from the construction of the $\(\Xi,\text{A},u_{\mathbbm{1}},u_{\mathbbm{2}}\)$-CPGM.  Subsequently, given any $t\geq 0$, we may  assume that
\begin{equation*}
\begin{split}
Y_{\mathbf{n_1,\,n_2}}\(t\)&=g_1\(x_1\)g_2\(x_2\)\ldots g_{|\eta_{\mathbf{n_1,\,n_2}}\(t\)|}\(x_{|\eta_{\mathbf{n_1,\,n_2}}\(t\)|}\)\\
:&= Y_{\mathbf{n_1}}^{\mathbbm{1}}\(t\)\otimes Y_{\mathbf{n_2}}^{\mathbbm{2}}\(t\)
\end{split}
\end{equation*}
with
$$Y_{\mathbf{n_1}}^{\mathbbm{1}}\(t\)=\otimes_{\{i|\eta_{\mathbf{n_1,\,n_2}}\(t\)_i=\mathbbm{1},\,i=1,\ldots,|\eta_{\mathbf{n_1,\,n_2}}\(t\)|\}}g_i\(x_i\);$$
$$Y_{\mathbf{n_2}}^{\mathbbm{2}}\(t\)=\otimes_{\{i|\eta_{\mathbf{n_1,\,n_2}}\(t\)_i=\mathbbm{2},\,i=1,\ldots,|\eta_{\mathbf{n_1,\,n_2}}\(t\)|\}}g_i\(x_i\).$$
For simplicity of notation, let's write
$$\al_{\mathbf{n_1}}^{\mathbbm{1}}(t)=|\eta_{\mathbf{n_1,\,n_2}}(t)|_{\mathbbm{1}};\,\, \al_{\mathbf{n_2}}^{\mathbbm{2}}(t)=|\eta_{\mathbf{n_1,\,n_2}}(t)|_{\mathbbm{2}}$$
and set $\|\mathbf{v}\|=v_{1}+\cdots+v_{k}$ for any $\mathbf{v}=\(v_{1},\ldots,v_{k}\)\in[\infty]_0^k$.
Define
\begin{eqnarray}\label{F_{n,m}}
\begin{split}
F(\mathbf{n_1},\,\mathbf{n_2}):&=\mathbbm{P}\left[\mathcal{G}_{\mu}\(Y_{\mathbf{n_1,\,n_2}}(t),\,\eta_{\mathbf{n_1,\,n_2}}(t)\)\right]\\
&=\mathbbm{P}\left[\<\mu_{\mathbbm{1}}^{\otimes\al_{\mathbf{n_1}}^{\mathbbm{1}}(t)},
Y_{\mathbf{n_1}}^{{\mathbbm{1}}}(t)\>\<{\mu_{\mathbbm{2}}}^{\otimes\al_{\mathbf{n_2}}^{\mathbbm{2}}(t)},
Y_{\mathbf{n_2}}^{\mathbbm{2}}(t)\>\right].
\end{split}
\end{eqnarray}

By Lemma \ref{Haus}, in order to show that (\ref{F_{n,m}}) is the joint moment of some measure, it suffices to verify that
\begin{eqnarray*}
\begin{split}
&I:\,=\sum_{\mathbf{0}\leq\mathbf{p}\leq\mathbf{r}}\sum_{\mathbf{0}\leq\mathbf{q}\leq
\mathbf{h}}(-1)^{\|\mathbf{p}\|+\|\mathbf{q}\|}  {\mathbf{r}\choose\mathbf{p}}{\mathbf{h}\choose\mathbf{q}}
F(\mathbf{n_1}+\mathbf{p},\,
\mathbf{n_2}+\mathbf{q})\geq 0
\end{split}
\end{eqnarray*}
for any $\mathbf{r}=(r_1, \ldots, r_k)\in[\infty]_0^k$ and $\mathbf{h}=\(h_1,\ldots,h_{\ell}\)\in[\infty]_0^{\ell}$,
$\mathbf{p}=(p_1, \ldots, p_k)$ with $p_i\leq r_i$ and
$\mathbf{q}=\(q_1,\ldots,q_{\ell}\)$ with $q_i\leq h_i$.
It follows from (\ref{F_{n,m}}) that
\begin{eqnarray*}
\begin{split}
I=\sum_{\mathbf{0}\leq\mathbf{p}\leq\mathbf{r}}\sum_{\mathbf{0}\leq\mathbf{q}\leq\mathbf{h}}{\mathbf{r}\choose\mathbf{p}}{\mathbf{h}\choose\mathbf{q}}
(-1)^{\|\mathbf{p}\|+\|\mathbf{q}\|}\mathbbm{P}\left[\<\mu_{\mathbbm{1}}^{\otimes\al^{\mathbbm{1}}_{\mathbf{n_1}+\mathbf{p}}(t)},
Y_{\mathbf{n_1}+\mathbf{p}}^{\mathbbm{1}}(t)\>\<\mu_{\mathbbm{2}}^{\otimes\al^{\mathbbm{2}}_{\mathbf{n_2}+\mathbf{q}}(t)},
Y_{\mathbf{n_2}+\mathbf{q}}^{\mathbbm{2}}(t)\>\right]
\end{split}
\end{eqnarray*}
with initial values
$$Y_{\mathbf{n_1}+\mathbf{p}}^{\mathbbm{1}}(0)=\otimes_{i=1}^k\ind_{C_i}^{\otimes
n_{1i}+p_i}; \, Y_{\mathbf{n_2}+\mathbf{q}}^{\mathbbm{2}}(0)=\otimes_{i=1}^{\ell}\ind_{D_i}^{\otimes
n_{2i}+q_i};$$
$$\al^{\mathbbm{1}}_{\mathbf{n_1}+\mathbf{p}}(0)=\|\mathbf{n_1}\|+\|\mathbf{p}\|; \,
\al^{\mathbbm{2}}_{\mathbf{n_2}+\mathbf{q}}(0)=\|\mathbf{n_2}\|+\|\mathbf{q}\|.$$
For $\mathbf{r}$ and $\mathbf{h}$, let's write
\[\mathscr{C}_\mathbf{r}:=\{c=c_1\times\cdots\times c_k: c_i\subset [r_i]_0,\,
1\leq i\leq k\};\]
\[{\mathscr{C}}_\mathbf{h}:=\{\tilde{c}=\tilde{c}_1\times\cdots\times \tilde{c}_{\ell}: \tilde{c}_i\subset [h_i]_0,\,
1\leq i\leq {\ell}\}.\]
Put $\mathbf{c}=(|c_1|,\ldots,|c_k|)\in[\infty]_0^k$ 
and $\tilde{\mathbf{c}}=(|\tilde c_1|,\ldots,|\tilde
c_{\ell}|)\in[\infty]_0^{\ell}$. 
Then
\begin{eqnarray*}
\begin{split}
I&\,=\sum_{\scriptstyle\mathbf{0}\leq\mathbf{p}\leq\mathbf{r}}\sum_{\scriptstyle\mathbf{0}\leq\mathbf{q}\leq\mathbf{h}}\sum_{\scriptstyle c\in
{\mathscr{C}}_\mathbf{r},\atop \scriptstyle |c_i|=p_i,\,i\in[k]} \sum_{\scriptstyle \tilde c\in
{\mathscr{C}}_\mathbf{h},\atop \scriptstyle |\tilde c_i|=q_i,\,
i\in[\ell]}(-1)^{\|\mathbf{p}\|+\|\mathbf{q}\|}\\
&\qquad\times\mathbbm{P}\left[\<\mu_{\mathbbm{1}}^{\otimes\al^{\mathbbm{1}}_{\mathbf{n_1+c}}(t)},
Y_{\mathbf{n_1+c}}^{\mathbbm{1}}(t)\>\<\mu_{\mathbbm{2}}^{\otimes\al^{\mathbbm{2}}_{\mathbf{n_2}+\tilde{\mathbf{c}}}(t)},
Y_{\mathbf{n_2}+\tilde{\mathbf{c}}}^{\mathbbm{2}}(t)\>\right]\\
&\,=\sum_{c\in {\mathscr{C}}_{\mathbf{r}}}\sum_{\tilde c\in {\mathscr{C}}_{\mathbf{h}}}(-1)^{\|\mathbf
c\|+\|\tilde{\mathbf c}\|}\mathbbm{P}\left[\<\mu_{\mathbbm{1}}^{\otimes\al^{\mathbbm{1}}_{\mathbf{n_1+c}}(t)},
Y_{\mathbf{n_1+c}}^{\mathbbm{1}}(t)\>\<\mu_{\mathbbm{2}}^{\otimes\al^{\mathbbm{2}}_{\mathbf{n_2}+\tilde{\mathbf{c}}}(t)},
Y_{\mathbf{n_2}+\tilde{\mathbf{c}}}^{\mathbbm{2}}(t)\>\right]\\
\end{split}
\end{eqnarray*}
with initial values
$$Y_{\mathbf{n_1+c}}^{\mathbbm{1}}(0)=\otimes_{i=1}^k\(\otimes_{j=1}^{n_{1i}}\ind_{C_i}\otimes_{j=n_{1i}+1}^{n_{1i}+r_i}h_{ij}\);\,\,
Y_{\mathbf{n_2}+\tilde{\mathbf
c}}^{\mathbbm{2}}(0)=\otimes_{i=1}^{\ell}\(\otimes_{j=1}^{n_{2i}}\ind_{D_i}\otimes_{j=n_{2i}+1}^{n_{2i}+h_i}g_{ij}\);$$
$$\al^{\mathbbm{1}}_{\mathbf{n_1+c}}(0)=\|\mathbf{n_1}\|+\|\mathbf c\|;\,\,
\al^{\mathbbm{2}}_{\mathbf{n_2}+\tilde{\mathbf{c}}}(0)=\|\mathbf{n_2}\|+\|\tilde{\mathbf c}\|;$$
where
\begin{equation*}
\begin{aligned}
h_{ij}=\left\{\begin{array}{lll}
\ind_{C_i}\,\,&\mbox{if\,\,} j-n_{1i}\in c_i;\\
\ind      &\text{if\,\,} j-n_{1i}\not\in c_i;
\end{array}
\right.\\
\end{aligned}
\begin{aligned}
\quad\quad
g_{ij}=\left\{\begin{array}{lll}
\ind_{D_i} &\mbox{if\,\,} j-n_{2i}\in \tilde c_i;\\
       \ind       &\text{if\,\,} j-n_{2i}\not\in \tilde c_i.\\
\end{array}
\right.
\end{aligned}
\end{equation*}
Therefore,
\begin{eqnarray*}
\begin{split}
I&\,=\mathbbm{P}\left[\<\mu_{\mathbbm{1}}^{\otimes\al^{\mathbbm{1}}_{\mathbf {n_1}+\mathbf r}(t)}, Y_{\mathbf
{n_1}+\mathbf r}^{\mathbbm{1}}(t)\>\<\mu_{\mathbbm{2}}^{\otimes\al^{\mathbbm{2}}_{\mathbf{n_2}+\mathbf h}(t)},
Y_{\mathbf{n_2}+\mathbf h}^{\mathbbm{2}}(t)\>\right]\\
&\,=\mathbbm{P}\left[\mathcal{G}_{\mu}\(Y_{\mathbf{n_1}+\mathbf r,\,\mathbf{n_2}+\mathbf h}(t), \eta_{\mathbf{n_1}+\mathbf r,\,\mathbf{n_2}+\mathbf
h}(t)\)\right]
\,\geq 0
\end{split}
\end{eqnarray*}
with initial values
$$Y_{\mathbf{n_1}+\mathbf
r}^{\mathbbm{1}}(0)=\otimes_{i=1}^k\left[\otimes_{j=1}^{n_{1i}}\ind_{C_i}\otimes_{j=n_{1i}+1}^{n_{1i}+r_i}(\ind-\ind_{C_i})\right];$$
$$Y_{\mathbf{n_2}+\mathbf
h}^{\mathbbm{2}}(0)=\otimes_{i=1}^{\ell}\left[\otimes_{j=1}^{n_{2i}}\ind_{D_i}\otimes_{j=n_{2i}+1}^{n_{2i}+h_i}(\ind-\ind_{D_i})\right];$$
$$\al^{\mathbbm{1}}_{\mathbf{n_1}+\mathbf r}(0)= \|\mathbf{n_1}\|+\|\mathbf r\|;\,\,\al^{\mathbbm{2}}_{\mathbf{n_2}+\mathbf h}(0)= \|\mathbf{n_2}\|+\|\mathbf h\|.$$

Consequently, for any $\mathbf{n_1}\in[\infty]_0^k$ and
$\mathbf{n_2}\in[\infty]_0^{\ell}$ with $k+\ell>0$, there exist $[0, 1]$-valued random
variables $W_1, \ldots, W_k$ and $Z_1, \ldots, Z_{\ell}$ on some complete probability space $(\Omega^*, \mathcal{F}^*, \mathbbm{Q})$ such that
\begin{eqnarray}\label{double sequence}
\mathbbm{Q}\left[\prod_{i=1}^k W_i^{n_{1i}}\prod_{j=1}^{\ell} Z_j^{n_{2j}}\right]=F(\mathbf{n_1},\,
\mathbf{n_2}).
\end{eqnarray}
Define $V_{\mu,\,t}$ as a $M_1\(E^2\)$-valued random variable satisfying
{\rm
\begin{equation}\label{eq:V}
\begin{split}
&V_{\mu,\,t}\(\otimes_{i=1}^k\ind_{C_i}^{\otimes n_{1i}}\otimes_{i=1}^{\ell}\ind_{D_i}^{\otimes n_{2i}}\)
=\prod_{i=1}^k W_i^{n_{1i}}\prod_{j=1}^{\ell} Z_j^{n_{2j}}.
\end{split}
\end{equation}
}
Combining with (\ref{F_{n,m}}), (\ref{double sequence}) and (\ref{eq:V}), we obtain the result.

\end{proof}

\begin{proposition}\label{cor0}
For each $\mu\in M_1\(E^2\)$ and $t\geq0$, the $M_1\(E^2\)$-valued random variable $V_{\mu,\,t}$
has the following properties:
$$V_{\mu,\,t}\(\(C_1\cup C_2\)\times D\)=V_{\mu,\,t}\({C_1\times D}\)+V_{\mu,\,t}\({C_2\times D}\);$$
$$V_{\mu,\,t}\(C\times \(D_1\cup D_2\)\)=V_{\mu,\,t}\(C\times D_1\)+V_{\mu,\,t}\(C\times D_2\),$$
where $C_1,\,C_2,\,D_1,\,D_2,\,C,\,D\in\mathcal{D}$ with $C_1\cap C_2=D_1\cap D_2=\emptyset$. Moreover, we have $V_{\mu,\,t}\(\emptyset\times\emptyset\)=0$ and $V_{\mu,\,t}\(E\times E\)=1.$
\end{proposition}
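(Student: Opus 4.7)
The plan is to derive both identities from the moment characterization of $V_{\mu,\,t}$ in Proposition \ref{cor1} and equation (\ref{eq:V}), combined with the linearity of CPGM evolution under sums of initial data from Proposition \ref{coupling property}. Each identity is reformulated as the vanishing of a second moment.

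For the additivity in the first coordinate, set
\[
\Delta := V_{\mu,\,t}\((C_1 \cup C_2) \times D\) - V_{\mu,\,t}\(C_1 \times D\) - V_{\mu,\,t}\(C_2 \times D\),
\]
and aim to show $\mathbbm{Q}[\Delta^2] = 0$, which forces $\Delta = 0$ $\mathbbm{Q}$-almost surely. Writing $A,B,C$ for the three terms in $\Delta$, the expansion $\mathbbm{Q}[\Delta^2] = \mathbbm{Q}[A^2] - 2\mathbbm{Q}[AB] - 2\mathbbm{Q}[AC] + \mathbbm{Q}[B^2] + 2\mathbbm{Q}[BC] + \mathbbm{Q}[C^2]$ yields six second-moment contributions, each of which, via (\ref{eq:V}) and Proposition \ref{cor1}, equals $\mathbbm{P}[\mathcal{G}_\mu(Y(t), \eta(t))]$ for a $(\Xi,\text{A},u_{\mathbbm{1}},u_{\mathbbm{2}})$-CPGM with initial datum built as a tensor product of indicators from $\{\ind_{C_1}, \ind_{C_2}, \ind_{C_1 \cup C_2}, \ind_D\}$ and label tuple $\eta = (\mathbbm{1},\mathbbm{1},\mathbbm{2},\mathbbm{2})$. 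Applying $\ind_{C_1 \cup C_2} = \ind_{C_1} + \ind_{C_2}$ (valid since $C_1 \cap C_2 = \emptyset$) together with Proposition \ref{coupling property}, every tensor factor of $\ind_{C_1 \cup C_2}$ in the initial data splits linearly through the CPGM evolution into a sum over $\ind_{C_1}$ and $\ind_{C_2}$. One then obtains $\mathbbm{Q}[A^2] = \mathbbm{Q}[B^2] + 2\mathbbm{Q}[BC] + \mathbbm{Q}[C^2]$, $\mathbbm{Q}[AB] = \mathbbm{Q}[B^2] + \mathbbm{Q}[BC]$, and $\mathbbm{Q}[AC] = \mathbbm{Q}[BC] + \mathbbm{Q}[C^2]$. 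Substitution gives $\mathbbm{Q}[\Delta^2] = 0$, reflecting the algebraic identity $(a+b)^2 - 2(a+b)a - 2(a+b)b + a^2 + 2ab + b^2 = 0$. The additivity in the second coordinate follows by the same argument with the roles of $\mathbbm{1}$ and $\mathbbm{2}$ interchanged.

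For the normalization, note that $\ind_E \equiv 1$ is fixed by each Feller semigroup $T_t^{(n)}$ (since $T_t 1 = 1$) and by every coalescence map $\Phi_\pi^{\mathbbm{i}}$. Consequently a $(\Xi,\text{A},u_{\mathbbm{1}},u_{\mathbbm{2}})$-CPGM started from $Y(0) = \ind_E^{\otimes n}$ satisfies $Y(t) \equiv 1$ for all $t \geq 0$, whence $\mathbbm{Q}[V_{\mu,\,t}(E \times E)^n] = \mathbbm{P}[\mathcal{G}_\mu(1, \eta(t))] = 1$ for every $n$. Since $V_{\mu,\,t}(E \times E) \in [0,1]$, this forces $V_{\mu,\,t}(E \times E) = 1$ $\mathbbm{Q}$-a.s. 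The identical argument starting from $Y(0) = \ind_\emptyset^{\otimes n} \equiv 0$ yields $V_{\mu,\,t}(\emptyset \times \emptyset) = 0$ $\mathbbm{Q}$-a.s.

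The principal obstacle is the bookkeeping required to match each of the six second-moment terms to its correct form in Proposition \ref{cor1}: the scalars $A, B, C$ are moments of $V_{\mu,\,t}$ taken along three different set families, so the admissible families $\{C_i\}$ and cardinalities $n_{1i}$ appearing in (\ref{eq:V}) change from term to term. Once this matching is made explicit and Proposition \ref{coupling property} is invoked to linearize the $\ind_{C_1 \cup C_2}$ factors, the cancellation in $\mathbbm{Q}[\Delta^2]$ is reduced to a short arithmetic check.
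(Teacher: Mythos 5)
Your proposal is correct and follows essentially the same route as the paper: both reduce the additivity claims to showing that the second moment of the discrepancy vanishes, evaluate the expanded square via the moment representation of Proposition \ref{cor1} together with the linearity supplied by Proposition \ref{coupling property} applied to $\ind_{C_1\cup C_2}=\ind_{C_1}+\ind_{C_2}$, and obtain the normalization from the first-moment computation combined with the pointwise bounds $0\leq V_{\mu,\,t}\leq 1$. The only cosmetic difference is that the paper sums all seven initial data of the coupled CPGMs into the identically zero function at once, whereas you decompose each of the six moment terms separately before cancelling.
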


\begin{proof}
Note that
\begin{equation*}
\begin{split}
&\left[V_{\mu,\,t}\(\(C_1\cup C_2\)\times D\)-V_{\mu,\,t}\({C_1\times D}\)-V_{\mu,\,t}\({C_2\times D}\)\right]^2\\
&\,=V_{\mu,\,t}^2\(\(C_1\cup C_2\)\times D\)+V_{\mu,\,t}^2\({C_1\times D}\)+V_{\mu,\,t}^2\({C_2\times D}\)\\
&\qquad-2V_{\mu,\,t}\(\(C_1\cup C_2\)\times D\)\times V_{\mu,\,t}\({C_1\times D}\)\\
&\qquad-2V_{\mu,\,t}\(\(C_1\cup C_2\)\times D\)\times V_{\mu,\,t}\({C_2\times D}\)\\
&\qquad+V_{\mu,\,t}\({C_1\times D}\)\times V_{\mu,\,t}\({C_2\times D}\)\\
&\qquad+V_{\mu,\,t}\({C_2\times D}\)\times V_{\mu,\,t}\({C_1\times D}\).
\end{split}
\end{equation*}
By the coupling property, we can associate these terms above with seven $\(\Xi,\text{A},u_{\mathbbm{1}},u_{\mathbbm{2}}\)$-CPGMs
$\(Y_i\(t\),\eta\(t\)\)_{t\geq 0}$, $i=1,\ldots,7$ driven by a $\(\Xi,u_{\mathbbm{1}},u_{\mathbbm{2}}\)$-CPGM.
Put $\eta\(0\)=\{\mathbbm{1,1,2,2}\}$ and
\begin{equation*}
\begin{split}
&Y_1\(0\)=\ind_{C_1\cup C_2}\(x_1\)\ind_{C_1\cup C_2}\(x_2\)\ind_{D}\(y_1\)\ind_{D}\(y_2\);\\
&Y_2\(0\)=\ind_{C_1}\(x_1\)\ind_{C_1}\(x_2\)\ind_{D}\(y_1\)\ind_{D}\(y_2\);\\
&Y_3\(0\)=\ind_{C_2}\(x_1\)\ind_{C_2}\(x_2\)\ind_{D}\(y_1\)\ind_{D}\(y_2\);\\
&Y_4\(0\)=-2\ind_{C_1\cup C_2}\(x_1\)\ind_{C_1}\(x_2\)\ind_{D}\(y_1\)\ind_{D}\(y_2\);\\
&Y_5\(0\)=-2\ind_{C_1\cup C_2}\(x_1\)\ind_{C_2}\(x_2\)\ind_{D}\(y_1\)\ind_{D}\(y_2\);\\
&Y_6\(0\)=\ind_{C_1}\(x_1\)\ind_{C_2}\(x_2\)\ind_{D}\(y_1\)\ind_{D}\(y_2\);\\
&Y_7\(0\)=\ind_{C_2}\(x_1\)\ind_{C_1}\(x_2\)\ind_{D}\(y_1\)\ind_{D}\(y_2\).
\end{split}
\end{equation*}
It follows from 
Proposition \ref{cor1} that
\begin{equation*}
\begin{split}
&\mathbbm{Q}\left[V_{\mu,\,t}\(\(C_1\cup C_2\)\times D\)-V_{\mu,\,t}\({C_1\times D}\)-V_{\mu,\,t}\({C_2\times D}\)\right]^2\\
&\,=\sum_{i=1}^7\mathbbm{P}\left[\mathcal{G}_{\mu}\(Y_i\(t\),\eta\(t\)\)\right]\\
&\,=\mathbbm{P}\left[\mathcal{G}_{\mu}\({Y}\(t\),\eta\(t\)\)\right]
\end{split}
\end{equation*}
where the last equality follows from  Proposition \ref{coupling property}, and $\({Y}\(t\),\eta\(t\)\)_{t\geq 0}$ is  also a $\(\Xi,\text{A},u_{\mathbbm{1}},u_{\mathbbm{2}}\)$-CPGMs driven by the same $(\Xi,u_{\mathbbm{1}},u_{\mathbbm{2}})$-CPGM  with
$$Y\(0\)=\sum_{i=1}^7Y_i\(0\)=0.$$ As a consequence, $\mathbbm{P}\left[\mathcal{G}_{\mu}\({Y}\(t\),\eta\(t\)\)\right]=0$ and
 we have
\begin{equation*}\label{1}
V_{\mu,\,t}\(\(C_1\cup C_2\)\times D\)=V_{\mu,\,t}\({C_1\times D}\)+V_{\mu,\,t}\({C_2\times D}\).
\end{equation*}
Similarly, one can show that
\begin{equation*}\label{2}
V_{\mu,\,t}\(C\times \(D_1\cup D_2\)\)=V_{\mu,\,t}\(C\times D_1\)+V_{\mu,\,t}\(C\times D_2\).
\end{equation*}
Choosing $Y_{\(1\),\,\(1\)}\(0\)=\ind_{\emptyset}\otimes\ind_{\emptyset}$ and $Y_{\(1\),\,\(1\)}\(0\)=\ind_{E}\otimes\ind_{E}$ in  (\ref{F_{n,m}}),
we obtain
\begin{equation}\label{eq:normal}
\mathbbm{Q}\left[V_{\mu,\,t}\(\emptyset\times\emptyset\)\right]=0\,\,\text{and}\,\,\mathbbm{Q}\left[V_{\mu,\,t}\(E\times E\)\right]=1.
\end{equation}
It follows from (\ref{eq:V}) that $0\leq V_{\mu, t}(C\times D)\leq1$  holds for any $C\times D\in\mathcal{D}\times\mathcal{D}$. Therefore,
\begin{equation*}
V_{\mu,\,t}\(\emptyset\times\emptyset\)=0\,\text{and}\,V_{\mu,\,t}\(E\times E\)=1.
\end{equation*}

\end{proof}

\begin{theorem}
Given any $\mu\in M_1\(E^2\)$, there exists a $M_1\(E^2\)$-valued Markov process $\(\mu\(t\)\)_{t\geq0}$ on some complete probability space $\(\Omega^*,\mathcal{F}^{*},\mathbbm{Q}\)$ with transition semigroup $\left(Q_t\right)_{t\geq0}$ on $\bar{B}_{b,\,p}\(M_1\(E^2\)\)\subseteq B\(M_1\(E^2\)\)$.
In particular, for any $\mathbb{F}_{f,\,\eta\,,n}\in B_{b,\,p}\(M_1\(E^2\)\)\subseteq B\(M_1\(E^2\)\)$ with $f\in B\(E^n\)$
and $\eta\in\mathbbm{S}^n$,
\begin{equation}\label{eq-dual-2}
\begin{split}
Q_t\mathbb{F}_{f,\,\eta,\,n}\(\mu\)&=\int_{M_1\(E^2\)}\mathbb{F}_{f,\,\eta,\,n}\(v\)Q_t\(\mu,dv\)\\
&=\mathbbm{P}_{(f,\,\eta)}\left[\<{\mu}_{\eta(t)}, Y(t)\>\right]\\
&=\mathbbm{P}_{(f,\,\eta)}\mathbb{F}_{Y(t),\,\eta(t),\,|\eta(t)|}(\mu),
\end{split}
\end{equation}
where $\(Y\(t\),\eta\(t\)\)_{t\geq 0}$ is a $\(\Xi,\text{A},u_{\mathbbm{1}},u_{\mathbbm{2}}\)$-CPGM with initial value $\(Y\(0\),\eta\(0\)\)=\(f,\eta\)$.
The $M_1\(E^2\)$-valued Markov process $\(\mu\(t\)\)_{t\geq 0}$ is  {\it the generalized stepping stone model with $\Xi$-resampling mechanism.}

\end{theorem}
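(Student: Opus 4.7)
The plan is to use the random variables $V_{\mu,t}$ constructed in Proposition \ref{cor1} to define the one-step transition kernel $Q_t$, establish the moment--duality formula (\ref{eq-dual-2}) and Chapman--Kolmogorov via the Markov property of the dual, and finally invoke Kolmogorov's extension theorem to produce the Markov process on $M_1\(E^2\)$.

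First I would upgrade $V_{\mu,t}$ from a random set function on the countable ring $\mathcal{D}\times\mathcal{D}$ to a bona fide $M_1\(E^2\)$-valued random variable. Proposition \ref{cor0} supplies finite additivity, normalization, and the bound $0\leq V_{\mu,t}\leq 1$ on $\mathcal{D}\times\mathcal{D}$; since $\mathcal{D}$ is countable these identities hold simultaneously on a single full-$\mathbbm{Q}$-measure event. Compactness of $E=[0,1]$ together with a continuity-at-$\emptyset$ argument (as in the proof of the Riesz representation theorem) then promotes the finitely additive $V_{\mu,t}$ to a countably additive Borel probability measure on $\mathscr{B}\(E^2\)=\sigma\(\mathcal{D}\times\mathcal{D}\)$. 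Define $Q_t\(\mu,\cdot\)$ as the law of this $M_1\(E^2\)$-valued random element and set $Q_t\mathbb{F}\(\mu\):=\mathbbm{Q}\!\left[\mathbb{F}\(V_{\mu,t}\)\right]$.

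For the duality formula (\ref{eq-dual-2}), Proposition \ref{cor1} gives the identity whenever $f$ is a tensor product of indicators of sets in $\mathcal{D}$. Linearity extends it to finite linear combinations of such indicator tensors, and a monotone class argument (using the bound $|\mathbb{F}_{f,\eta,n}\(\mu\)|\leq \|f\|_\infty$ and dominated convergence on both sides) extends it to all $f\in B\(E^n\)$, i.e., to all of $B_{b,p}\(M_1\(E^2\)\)$. The Chapman--Kolmogorov relation then reduces to the Markov property of the $\(\Xi,\text{A},u_{\mathbbm{1}},u_{\mathbbm{2}}\)$-CPGM: Fubini plus two applications of (\ref{eq-dual-2}) give
\begin{equation*}
Q_sQ_t\mathbb{F}_{f,\eta,n}\(\mu\)=\mathbbm{P}_{(f,\eta)}\!\left[Q_s\mathbb{F}_{Y(t),\eta(t),|\eta(t)|}\(\mu\)\right]=\mathbbm{P}_{(f,\eta)}\!\left[\mathbbm{P}_{(Y(t),\eta(t))}\!\left[\left\langle \mu_{\tilde\eta(s)},\tilde Y(s)\right\rangle\right]\right],
\end{equation*}
where $(\tilde Y,\tilde\eta)$ is an independent copy of the dual starting from $(Y(t),\eta(t))$; by the Markov property of the dual, concatenation at time $t$ yields a $\(\Xi,\text{A},u_{\mathbbm{1}},u_{\mathbbm{2}}\)$-CPGM distributed as $(Y(s+t),\eta(s+t))$ started from $(f,\eta)$, so the right-hand side collapses to $Q_{s+t}\mathbb{F}_{f,\eta,n}\(\mu\)$. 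The contraction bound $\|Q_t\mathbb{F}\|\leq\|\mathbb{F}\|$ extends $Q_t$ to $\bar{B}_{b,p}\(M_1\(E^2\)\)$, and Kolmogorov's extension theorem on the Polish space $M_1\(E^2\)$ then furnishes the desired Markov process $(\mu(t))_{t\geq 0}$.

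I expect the main obstacle to lie in the first step, namely showing that on a single full-probability event the finitely additive $V_{\mu,t}$ of Proposition \ref{cor0} actually extends to a countably additive Borel probability measure. The set-by-set additivity and the countability of $\mathcal{D}$ take care of the exceptional-set bookkeeping, but the crucial continuity from above at $\emptyset$ must be extracted from compactness of $E$ combined with the moment estimates afforded by the coupling property (Proposition \ref{coupling property}) applied to sequences of approximating indicators in $\mathcal{D}\times\mathcal{D}$.
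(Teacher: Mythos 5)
Your proposal is correct and its skeleton coincides with the paper's proof: Propositions \ref{cor1} and \ref{cor0} supply the moment data and additivity, the duality formula (\ref{eq-dual-2}) is first checked on tensor products of indicators of sets in $\mathcal{D}$ and then extended by linearity, monotone limits and the monotone class theorem to all of $B_{b,\,p}\(M_1\(E^2\)\)$, Chapman--Kolmogorov follows from the Markov property of the dual $\(\Xi,\text{A},u_{\mathbbm{1}},u_{\mathbbm{2}}\)$-CPGM, and the contraction bound carries $Q_t$ to $\bar{B}_{b,\,p}\(M_1\(E^2\)\)$. The one step where you genuinely diverge is the passage from the set function $V_{\mu,\,t}$ on the ring $\mathcal{D}\times\mathcal{D}$ to an honest random element of $M_1\(E^2\)$. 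The paper does not extend the set function directly: it defines a positive linear functional $\Lambda\(f\)=\lim_n\Lambda\(f_n\)$ on $C\(E^2\)$ by evaluating $V_{\mu,\,t}$ on dyadic step approximations $f_n$ of $f$, shows the sequence is Cauchy using the uniform continuity of $f$ together with Proposition \ref{cor0}, and then invokes the Riesz representation theorem. Your Carath\'eodory-type route also works, but the continuity-at-$\emptyset$ obligation you flag is real and not quite a quotation of compactness of $E$: the generating sets $\left[i/2^n,(i+1)/2^n\right)$ are not compact, so you need a compact-class argument, approximating each half-open dyadic interval from inside and showing that $V_{\mu,\,t}$ of the leftover slivers $\left[(i+1)/2^n-2^{-m},(i+1)/2^n\right)\times E$ tends to $0$ almost surely as $m\to\infty$; this does follow, since these values are nonincreasing in $m$ by additivity and their $\mathbbm{Q}$-expectations are dual expectations started from the indicator of the sliver, which vanish in the limit by dominated convergence in the dual. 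The paper's detour through $C\(E^2\)$ and Riesz buys exactly the avoidance of that verification, at the cost of the explicit Cauchy estimate for $\Lambda\(f_n\)$; your route is more direct once the sliver estimate is in place. The remaining steps of your argument match the paper's proof essentially line for line.
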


\begin{proof}
Initial value and transition semigroup are sufficient to determine the finite dimensional distributions of a $M_1\(E^2\)$-valued Markov process. We carry out the proof in two steps.

{\bf (i) Existence of the measure.} We show that for each $\mu\in M_1\(E^2\)$ and $t\geq0$, there
exists a random measure $\bar{V}_{\mu,\,t}\in M_1\(E^2\)$ with distribution function  denoted by $p_t\(\mu,\cdot\)$.

For any $f\in C\(E^2\)$, let $\(f_n\)_{n\geq 1}$ be a sequence of step functions with
\begin{equation*}
\begin{split}
f_n\(x,y\)=&\sum_{i=0}^{2^n-2}\sum_{j=0}^{2^n-2}f\(\frac{i}{2^n},\frac{j}{2^n}\)
\ind_{\left[\frac{i}{2^n},\,\frac{i+1}{2^n}\right)\times \left[\frac{j}{2^n},\,\frac{j+1}{2^n}\right)}\(x,y\)\\
&\quad+\sum_{j=0}^{2^n-2}f\(\frac{2^n-1}{2^n},\frac{j}{2^n}\)
\ind_{\left[\frac{2^n-1}{2^n},\,1\right]\times \left[\frac{j}{2^n},\,\frac{j+1}{2^n}\right)}\(x,y\)\\
&\quad+\sum_{i=0}^{2^n-2}f\(\frac{i}{2^n},\frac{2^n-1}{2^n}\)
\ind_{\left[\frac{i}{2^n},\,\frac{i+1}{2^n}\right)\times \left[\frac{2^n-1}{2^n},\,1\right]}\(x,y\)\\
&\quad+f\(\frac{2^n-1}{2^n},\frac{2^n-1}{2^n}\)\ind_{\left[\frac{2^n-1}{2^n},\,1\right]\times \left[\frac{2^n-1}{2^n},\,1\right]}\(x,y\).
\end{split}
\end{equation*}
Clearly, $\(f_n\)_{n\geq 1}$ uniformly converges to $f$. Define $\Lambda\,:\,C\(E^2\)\rightarrow\mathbbm{R}$
such that
\begin{equation*}
\begin{split}
\Lambda\(f\):=\lim_{n\rightarrow\infty}\Lambda\(f_n\):=\lim_{n\rightarrow\infty}\left[I_1\(n\)+I_2\(n\)+I_3\(n\)+I_4\(n\)\right]
\end{split}
\end{equation*}
with
\begin{equation*}
\begin{split}
I_1\(n\)=&\sum_{i=0}^{2^n-2}\sum_{j=0}^{2^n-2}f\(\frac{i}{2^n},\frac{j}{2^n}\)
V_{\mu,\,t}\(\left[\frac{i}{2^n},\,\frac{i+1}{2^n}\right)\times \left[\frac{j}{2^n},\,\frac{j+1}{2^n}\right)\);\\
I_2\(n\)=&\sum_{j=0}^{2^n-2}f\(\frac{2^n-1}{2^n},\frac{j}{2^n}\)
V_{\mu,\,t}\(\left[\frac{2^n-1}{2^n},\,1\right]\times \left[\frac{j}{2^n},\,\frac{j+1}{2^n}\right)\);\\
I_3\(n\)=&\sum_{i=0}^{2^n-2}f\(\frac{i}{2^n},\frac{2^n-1}{2^n}\)
V_{\mu,\,t}\(\left[\frac{i}{2^n},\,\frac{i+1}{2^n}\right)\times \left[\frac{2^n-1}{2^n},\,1\right]\);\\
I_4\(n\)=&f\(\frac{2^n-1}{2^n},\frac{2^n-1}{2^n}\)V_{\mu,\,t}\(\left[\frac{2^n-1}{2^n},\,1\right]\times \left[\frac{2^n-1}{2^n},\,1\right]\).
\end{split}
\end{equation*}

Combining the uniform continuity of function $f$ with Proposition \ref{cor0}, one can show that for any positive integer $m$,
$$|\Lambda\(f_{n+m}\)-\Lambda\(f_{n}\)|\leq \epsilon$$
holds for all $n$ large enough, namely, $\(\Lambda\(f_{n}\)\)_{n\geq 1}$ is a Cauchy sequence. The limit of the sequence exists and  $\Lambda$ is well-defined on $C\(E^2\)$.
The linear property of $\Lambda$ can be easily obtained from its definition. For any $f,\,g\in C\(E^2\)$ with
$$\| f-g\|=\sup_{\(x,\,y\)\in E^2}\mid f\(x,\,y\)-g\(x,\,y\)\mid\leq \epsilon,$$
we can choose the value of $n$ large enough and obtain
\begin{equation*}
\begin{split}
&\mid\Lambda\(f\)-\Lambda\(g\)\mid\\
&\leq\mid\Lambda\(f\)-\Lambda\(f_n\)\mid+\mid\Lambda\(f_n\)-\Lambda\(g_n\)\mid+\mid\Lambda\(g_n\)-\Lambda\(g\)\mid\\
&\leq3\epsilon.
\end{split}
\end{equation*}
In conclusion, $\Lambda:\,C\(E^2\)\rightarrow\mathbbm{R}$ is a continuous linear functional satisfying $\Lambda\(f\)\geq 0$ whenever $f\geq 0$.
By Riesz representation theorem, there exits a unique random probability measure $\bar{V}_{\mu,\,t}$
on $\sigma\(\mathcal{E}\times\mathcal{E}\)$
such that
$$\Lambda\(f\)=\int_{E^2}f d\bar{V}_{\mu,\,t}.$$
Define $p_t\(\mu,\cdot\)$ as the distribution of $\bar{V}_{\mu,\,t}$
for any $t\geq 0$.

{\bf (ii) Existence of semigroup $\(Q_t\)_{t\geq 0}$ on $\bar{B}_{b,\,p}\(M_1\(E^2\)\)$.} Denote by
\begin{equation}\label{eq:Q_t}
Q_t\mathbbm{F}\(\mu\)=\int_{M_1\(E^2\)}\mathbbm{F}\(\nu\)p_t\(\mu,d\,\nu\)
\end{equation}
for any $\mathbbm{F}\in B\(M_1\(E^2\)\)$. We first verify that there exists enough $\mathbbm{F}$ such that the value of (\ref{eq:Q_t}) can be
represented from the moments of the dual  $\(\Xi,\text{A},u_{\mathbbm{1}},u_{\mathbbm{2}}\)$-CPGMs. Then we discuss the semigroup property.

(a) {\it Dual representation.}
For any $n\in[\infty]$,
$\mathcal{D}^n$ is a $\pi$-system on $E^{n}$.
Let $\mathcal{H}$ be the collection of functions  $f\in {B}\(E^{n}\)$ with $n\in[\infty]$ satisfying
\begin{equation*}\label{eq-dual-4}
\int_{M_1\(E^2\)}\mathbb{F}_{f,\,\eta,\,n}\(v\)p_t\(\mu,dv\)=\mathbbm{P}_{(f,\,\eta)}\left[\<{\mu}_{\eta(t)}, Y(t)\>\right],
\end{equation*}
where $({Y}(t), \eta(t))_{t\geq0}$ is the dual
$\(\Xi,\text{A},u_{\mathbbm{1}},u_{\mathbbm{2}}\)$-CPGM
with initial value $\(f,\eta\)$ satisfying $\eta\in{\mathbbm{S}}^n$.
It follows from (\ref{double sequence}) that
$$\otimes_{i=1}^k\ind_{C_i}^{\otimes n_{1i}}\otimes_{j=1}^{\ell}\ind_{D_j}^{\otimes n_{2j}}\in\mathcal{H}.$$
By the linearity of expectation and integration we also have
$ah+bg\in\mathcal{H}$ for any $h,g\in\mathcal{H}$ and $a,b\in\mathbbm{R}$.
It is obvious that
$\ind_{E^n}\in\mathcal{H}$. If $h_m\in\mathcal{H},m\geq1,$ $0\leq h_m\uparrow h$ and $h$ is bounded,
let $(Y_m(s), \eta(s))_{s\leq t}$ and $(Y(s), \eta(s))_{s\leq t}$ be the associated dual processes driven by a $(\Xi,u_{\mathbbm{1}},u_{\mathbbm{2}})$-CPGM with initial values $(h_m, \eta)$ and $(h, \eta)$, respectively.  Observing that $Y_m(t)\uparrow Y(t)$ as $m\rightarrow\infty$, then by the dominated convergence theorem, we have
\begin{equation*}
\begin{split}
\int_{M_1\(E^2\)}\mathbb{F}_{h,\,\eta,\,n}\(v\)p_t\(\mu,dv\)
&=\lim_{m\rightarrow\infty} \int_{M_1\(E^2\)}\mathbb{F}_{h_m,\,\eta,\,n}\(v\)p_t\(\mu,dv\)  \\
&=\lim_{m\rightarrow\infty} \mathbbm{P}_{(h_m,\,\eta)}\left[\langle \mu_{\eta(t)}, Y_m(t)\rangle \right] \\
&=\mathbbm{P}_{(h,\,\eta)}\left[\<{\mu}_{\eta(t)}, Y(t)\>\right].
\end{split}
\end{equation*}
Subsequently,  $h\in\mathcal{H}$.
By (\ref{double sequence}) we know that
{\rm $\ind_{C}\in\mathcal{H}$} for any $ C\in\mathcal{D}^n$. Applying 
the  monotone class theorem, $\mathcal{H}$ contains all of the $\sigma\(\mathcal{D}^n\)$-measurable real-valued functions and 
therefore, (\ref{eq-dual-2})
holds for any $\mathbb{F}_{f,\,\eta,\,n}\in {B_{b,\,p}}\(M_1\(E^2\)\)$.

(b) {\it Semigroup property.}
 By the Markov property of the dual $\(\Xi,\text{A},u_{\mathbbm{1}},u_{\mathbbm{2}}\)$-CPGM $(Y(s),\eta(s))_{s\geq 0}$, one can see that given any
$\mathbbm{F}_{f,\,\eta,\,n}\in B_{b,\,p}\(M_1\(E^2\)\)$ and $\mu\in M_1\(E^2\)$, $\forall\,s,\,t\geq 0$,
\begin{equation*}
\begin{split}
Q_sQ_t\mathbbm{F}_{f,\,\eta,\,n}(\mu)
&=\mathbbm{P}_{(f,\,\eta)}\left[Q_s\mathbbm{F}_{Y(t),\,\eta\(t\),\,|\eta\(t\)|}(\mu)\right]\\
&=\mathbbm{P}_{(f,\,\eta)}\left[\mathbbm{F}_{Y(s+t),\,\eta\(s+t\),\,|\eta\(s+t\)|}(\mu)\right]\\
&=Q_{s+t} \mathbbm{F}_{f,\,\eta,\,n}(\mu).
\end{split}
\end{equation*}
For general $\mathbbm{F}\in \bar{B}_{b,\,p}\(M_1\(E^2\)\)$, there exists an approximating sequence
$\{\mathbbm{F}_{k}, k\geq1\}\subseteq B_{b,\,p}\(M_1\(E^2\)\)$ such that
$$\lim_{k\rightarrow\infty}\|\mathbbm{F}_{k}-\mathbbm{F}\|=\lim_{k\rightarrow\infty}\sup_{\mu\in M_1\(E^2\)}\|\mathbbm{F}_{k}\(\mu\)-\mathbbm{F}\(\mu\)\|=0.$$
For any $t\geq 0$, 
we have
\begin{equation*}
\begin{split}
|Q_t\mathbbm{F}\(\mu\)-Q_t\mathbbm{F}_{k}\(\mu\)|
&\,=\int_{M_1\(E^2\)}\(\mathbbm{F}\(\nu\)-\mathbbm{F}_{k}\(\nu\)\)p_t\(\mu,d\,\nu\)\\
&\,\leq\|\mathbbm{F}-\mathbbm{F}_{k}\|\rightarrow 0\text{~~as~~}k\rightarrow\infty.
\end{split}
\end{equation*}
That is to say
$Q_t\mathbbm{F}\(\mu\)=\lim_{k\rightarrow\infty}Q_t\mathbbm{F}_{k}\(\mu\).$
Therefore, for any $s,\,t\geq 0$ and $\mathbbm{F}\in \bar{B}_{b,\,p}\(M_1\(E^2\)\)$, we have
\begin{equation*}
\begin{split}
Q_sQ_t\mathbbm{F}(\mu)
&\,=\lim_{k\rightarrow\infty}Q_sQ_t\mathbbm{F}_{k}(\mu)=\lim_{k\rightarrow\infty}Q_{s+t}\mathbbm{F}_{k}(\mu)
=Q_{s+t} \mathbbm{F}(\mu).
\end{split}
\end{equation*}
The semigroup property then follows.


\end{proof}

\begin{proposition}
 $(Q_t)_{t\geq 0}$ is a Feller semigroup on $C(M_1(E^2))$.
\end{proposition}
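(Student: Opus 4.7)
The plan is to verify the two defining properties of a Feller semigroup, namely (a) $Q_t$ maps $C(M_1(E^2))$ into itself and is a contraction, and (b) $\|Q_tF-F\|\to 0$ as $t\downarrow 0$ for every $F\in C(M_1(E^2))$. Since $E=[0,1]$ is compact, $M_1(E^2)$ endowed with the weak topology is a compact metric space, so $C(M_1(E^2))$ is a Banach space and Stone--Weierstrass gives that $C_p(M_1(E^2))$ is dense in it. Because $Q_t\mathbb{F}(\mu)=\int \mathbb{F}(\nu)\,p_t(\mu,d\nu)$ with $p_t(\mu,\cdot)$ a probability measure on $M_1(E^2)$, the contraction bound $\|Q_t\mathbb{F}\|\le\|\mathbb{F}\|$ is automatic. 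The strategy is therefore to prove (a) and (b) first on the dense core $C_p(M_1(E^2))$ and then extend by a standard $\frac{\epsilon}{3}$ argument using this contraction.

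For (a) on the core, fix $\mathbb{F}_{f,\eta,n}$ with $f=f_1\otimes\cdots\otimes f_n$, $f_k\in C(E)$. By the dual representation \eqref{eq-dual-2},
\[
Q_t\mathbb{F}_{f,\eta,n}(\mu)=\mathbbm{P}_{(f,\eta)}\bigl[\langle\mu_{\eta(t)},Y(t)\rangle\bigr].
\]
From the construction of the $(\Xi,\mathrm{A},u_{\mathbbm{1}},u_{\mathbbm{2}})$-CPGM, $Y(t)$ remains a tensor product of functions in $C(E)$ at every time, because each $T_s$-evolution preserves $C(E)$ by the Feller property of $A$ and the $\Phi^{\mathbbm{i}}_\pi$ maps only relabel coordinates. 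Consequently, conditionally on the driving CPGM, the map $\mu\mapsto\langle\mu_{\eta(t)},Y(t)\rangle$ is a finite product of linear evaluations of the form $\mu_{\mathbbm{i}}\mapsto\langle\mu_{\mathbbm{i}},g\rangle$ with $g\in C(E)$, each of which is continuous in the weak topology. Since $|\langle\mu_{\eta(t)},Y(t)\rangle|\le\|f\|_\infty$ uniformly, dominated convergence lets us interchange $\mathbbm{P}$ with the limit $\mu_m\Rightarrow\mu$, giving continuity of $Q_t\mathbb{F}_{f,\eta,n}$.

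For (b), we estimate uniformly in $\mu$ by splitting according to the first jump time $\tau_1$ of the driving CPGM. On $\{\tau_1>t\}$ we have $\eta(t)=\eta$ and $Y(t)=T_t^{(n)}f$, so
\[
\bigl|\langle\mu_{\eta(t)},Y(t)\rangle-\langle\mu_\eta,f\rangle\bigr|\le\|T_t^{(n)}f-f\|_\infty,
\]
which tends to $0$ as $t\downarrow 0$ by the Feller property of $A^{(n)}$. On $\{\tau_1\le t\}$, which has probability $O(t)$ depending only on $n$, both terms are bounded by $\|f\|_\infty$, so that contribution is $O(t)\|f\|_\infty$. Both bounds are independent of $\mu$, giving $\|Q_t\mathbb{F}_{f,\eta,n}-\mathbb{F}_{f,\eta,n}\|\to 0$. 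By linearity this extends to all of $C_p(M_1(E^2))$. For general $F\in C(M_1(E^2))$, pick $F_k\in C_p$ with $\|F_k-F\|<\epsilon$; then
\[
\|Q_tF-F\|\le\|Q_t(F-F_k)\|+\|Q_tF_k-F_k\|+\|F_k-F\|\le 2\epsilon+\|Q_tF_k-F_k\|,
\]
and the middle term vanishes as $t\downarrow 0$. Combining with the contraction property, this also shows that $Q_t$ preserves $C(M_1(E^2))$: the uniform limit of continuous functions $Q_tF_k$ is continuous.

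The main obstacle I anticipate is the step showing $Q_t$ sends $C_p$ into $C(M_1(E^2))$, because one must control the joint continuity in $\mu$ after taking expectation over the random structure $(Y(t),\eta(t))$ whose number of coordinates fluctuates over time. Once it is observed that $Y(t)$ stays in a tensor-product form with continuous one-variable factors (an immediate structural consequence of the CPGM construction), the rest of the argument reduces to a standard dominated convergence plus the Feller estimate for $T_t^{(n)}$ and a first-jump bound, both of which are uniform in $\mu$ and therefore pass through the density extension cleanly.
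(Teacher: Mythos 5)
Your proposal is correct and follows essentially the same route as the paper: contraction is immediate from the probability-kernel representation, strong continuity comes from conditioning on the first jump time $\tau_1$ of the driving CPGM together with the Feller property of $T_t^{(n)}$, and everything is extended from the dense core $C_p(M_1(E^2))$ by the standard $\epsilon/3$ argument. If anything, you are slightly more careful than the paper on two points the paper leaves implicit --- the $O(t)$ contribution from $\{\tau_1\le t\}$ in the strong-continuity estimate, and the verification via dominated convergence that $Q_t$ sends elements of the core to continuous functions --- but these are elaborations of the same argument, not a different one.
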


\begin{proof}
We need to verify the required properties point by point.

(a) {\it Semigroup property.} Since $C\(M_1\(E^2\)\)\subseteq \bar{B}_{b,\,p}\(M_1\(E^2\)\)$, the restriction of $\(Q_t\)_{t\geq 0}$
on $C\(M_1\(E^2\)\)$ satisfies the semigroup property.

(b) {\it Strongly continuous property.}
Given $\mathbbm{F}_{f,\,\eta,\,n}\in C_{p}\(M_1\(E^2\)\)$, let's recall that $\tau_1$ is the first jumping time of the dual $\(\Xi,\text{A},u_{\mathbbm{1}},u_{\mathbbm{2}}\)$-CPGM with initial value $\(f,\eta\)$. Thus $\tau_1\sim \exp\(\lambda\)$ with
$\lambda=|\eta|_{\mathbbm{2}}u_{\mathbbm{1}}+|\eta|_{\mathbbm{1}}u_{\mathbbm{2}}+\lambda_{|\eta|_{\mathbbm{2}}}+\lambda_{|\eta|_{\mathbbm{1}}}$.
The probability  of $\{\tau_1<t\}$
(that is either coalescence or migration has occurred by time $t$) converges to $0$ as $t\rightarrow 0$. Then
\begin{equation*}\label{eq-expand}
\begin{split}
&\lim_{t\rightarrow 0}| Q_t\mathbbm{F}_{f,\,\eta,\,n}(\mu)-\mathbbm{F}_{f,\,\eta,\,n}(\mu)|=\lim_{t\rightarrow 0}|e^{-\lambda t}\mathcal{G}_{\mu}\(T_t^{\(n\)}f,\eta\)-\mathcal{G}_{\mu}\(f,\eta\)|=0.
\end{split}
\end{equation*}
For general $\mathbbm{F}\in C\(M_1\(E^2\)\)$, we reapply the approximating sequence
$\{\mathbbm{F}_{k}, k\geq1\}\subseteq C_{p}\(M_1\(E^2\)\)$ to get
\begin{equation*}
\begin{split}
&\lim_{t\rightarrow 0}| Q_t\mathbbm{F}(\mu)-\mathbbm{F}(\mu)|\\
&\leq\lim_{k\rightarrow\infty}\lim_{t\rightarrow 0}\(|Q_t\mathbbm{F}(\mu)-Q_t\mathbbm{F}_{k}(\mu)|+|Q_t\mathbbm{F}_{k}(\mu)-\mathbbm{F}_{k}(\mu)|
+|\mathbbm{F}_{k}(\mu)-\mathbbm{F}(\mu)|\)\\
&=0\end{split}
\end{equation*}
Therefore $Q_t$ is strongly continuous.

(c) {\it Contraction operator.}
\begin{equation*}
\begin{split}
\| Q_t\mathbbm{F}\|=&\sup_{\mu}|Q_t\mathbbm{F}\(\mu\)|
=\sup_{\mu}\left|\int_{M_1\(E^2\)}\mathbbm{F}\(\nu\)p_t\(\mu,d\nu\)\right|
\leq \| \mathbbm{F}\|.
\end{split}
\end{equation*}
Consequently, $Q_t$ is a contraction operator.

(d) {\it $Q_t\,:\,C\(M_1\(E^2\)\)\rightarrow C\(M_1\(E^2\)\)$.} For any  $\mathbbm{F}\in C\(M_1\(E^2\)\)$,
$$Q_t\mathbbm{F}\(\mu\)=\lim_{k\rightarrow\infty}Q_t\mathbbm{F}_{k}\(\mu\),$$
we know that $Q_t\mathbbm{F}\(\mu\)$ is a limit point of a sequence belonging to $C_p\(M_1\(E^2\)\)$.
Thus, $Q_t\mathbbm{F}\(\mu\)\in C\(M_1\(E^2\)\)=\overline{C_p\(M_1\(E^2\)\)}$.

Combining with (a)-(d), one can see that $\(Q_t\)_{t\geq 0}$  is a Feller semigroup.
\end{proof}

Let $\mathcal{L}^{*}$ be the generator of the generalized stepping stone model with $\Xi$-resampling mechanism $(\mu(t))_{t\geq 0}$. For any $\mathbb{F}_{f,\,\eta\,,n}\in B_{b,\,p}\(M_1\(E^2\)\)$ with $f\in B\(E^n\)$
and $\eta\in\mathbbm{S}^n$,  one can see that
$$\mathcal{L}^{*}\mathbb{F}_{f,\,\eta,\,n}\(\mu\)=\lim_{t\rightarrow 0}\frac{Q_t\mathbb{F}_{f,\,\eta,\,n}\(\mu\)-\mathbb{F}_{f,\,\eta,\,n}\(\mu\)}{t}, \quad \mu\in M_1\(E^2\)$$
whenever the limit exists. What follows is a proposition to represent  $\mathcal{L}^{*}$ from the generator of the dual $\(\Xi,\text{A},u_{\mathbbm{1}},u_{\mathbbm{2}}\)$-CPGM. Recall $\mathcal{G}_{\mu} $ and $\mathcal{L}$ defined in (\ref{G_{f,u}}) and (\ref{generator-dual}), respectively.

\begin{proposition}
The generator $\mathcal{L}^{*}$ of the generalized stepping stone model with $\Xi$-resampling mechanism $(\mu(t))_{t\geq 0}$ with initial value $\mu\(0\)=\mu$ is of the form below
\begin{equation}\label{generator}
\begin{split}
&\mathcal{L}^{*}\mathbb{F}_{f,\,\eta,\,n}\(\mu\)
=\mathcal{L}\mathcal{G}_{\mu}\(f,\eta\)
\end{split}
\end{equation}
where $\mathbb{F}_{f,\,\eta\,,n}\in B_{b,\,p}\(M_1\(E^2\)\)$ with $\(f,\,\eta\)\in B\(E^n\)\times \mathbbm{S}^n$.
\end{proposition}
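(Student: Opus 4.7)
The plan is to differentiate the duality identity (\ref{eq-dual-2}) at $t=0$ and recognise the resulting limit as the action of the dual generator $\mathcal{L}$ (from Proposition~\ref{ge-pro}) on the map $(f,\eta)\mapsto \mathcal{G}_{\mu}(f,\eta)$ evaluated at the initial datum. Indeed, combining the defining limit for $\mathcal{L}^{*}$ with (\ref{eq-dual-2}) yields
\begin{equation*}
\mathcal{L}^{*}\mathbb{F}_{f,\,\eta,\,n}(\mu)=\lim_{t\to 0^{+}}\frac{1}{t}\,\mathbbm{P}_{(f,\,\eta)}\!\left[\mathcal{G}_{\mu}(Y(t),\eta(t))-\mathcal{G}_{\mu}(f,\eta)\right],
\end{equation*}
so the task reduces to showing that this limit exists and equals the right-hand side of (\ref{generator-dual}).

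To evaluate it, I would condition on the first jump time $\tau_{1}$ of the dual $(\Xi,\text{A},u_{\mathbbm{1}},u_{\mathbbm{2}})$-CPGM started at $(f,\eta)$. Since $|\eta|=n<\infty$, $\tau_{1}\sim\exp(\lambda)$ with finite rate $\lambda=|\eta|_{\mathbbm{1}}u_{\mathbbm{2}}+|\eta|_{\mathbbm{2}}u_{\mathbbm{1}}+\lambda_{|\eta|_{\mathbbm{1}}}+\lambda_{|\eta|_{\mathbbm{2}}}$, and
\begin{equation*}
\mathbbm{P}_{(f,\eta)}[\mathcal{G}_{\mu}(Y(t),\eta(t))]=e^{-\lambda t}\mathcal{G}_{\mu}(T_{t}^{(n)}f,\eta)+\int_{0}^{t}\lambda e^{-\lambda s}\,\mathbbm{P}_{(f,\eta)}\!\left[\mathcal{G}_{\mu}(Y(t),\eta(t))\,\big|\,\tau_{1}=s\right]ds.
\end{equation*}
Subtracting $\mathcal{G}_{\mu}(f,\eta)$, dividing by $t$, and sending $t\downarrow 0$, the first summand contributes $\mathcal{G}_{\mu}(A^{(n)}f,\eta)-\lambda\mathcal{G}_{\mu}(f,\eta)$ via the Feller semigroup property of $T_{t}^{(n)}$ guaranteed by Assumption~(II), while the integral produces the weighted sum of post-jump values $\lambda_{\pi}\mathcal{G}_{\mu}(\Phi_{\pi}^{\mathbbm{i}}f,\beta_{\pi}^{\mathbbm{i}}(\eta))$ over coalescence types in each colony $\mathbbm{i}\in\{\mathbbm{1,2}\}$ together with the migration terms $u_{\mathbbm{i}}\mathcal{G}_{\mu}(f,\gamma_{k,\mathbbm{i}}(\eta))$. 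Adding the two contributions reproduces exactly the right-hand side of (\ref{generator-dual}), which by Proposition~\ref{ge-pro} is $\mathcal{L}\mathcal{G}_{\mu}(f,\eta)$, yielding (\ref{generator}).

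The only real technicality is justifying that the limit passes inside the finite sum over one-step transitions and through the conditional expectation inside the integral. This is routine here: $f\in B(E^{n})$ is bounded, so $|\mathcal{G}_{\mu}(Y(s),\eta(s))|\leq\|f\|_{\infty}$ uniformly along every trajectory; the strong continuity of the Feller semigroup $T_{t}^{(n)}$ handles the no-jump term; and because the total jump rate $\lambda$ is finite and the list of possible one-step transitions from $(f,\eta)$ is countable (and summable with bound $\lambda\|f\|_{\infty}$), dominated convergence applies to the integral term. No ingredient beyond the construction of the dual process in Section~\ref{sec2}, Proposition~\ref{ge-pro}, and the duality identity (\ref{eq-dual-2}) is required, so the main obstacle is merely the bookkeeping just described.
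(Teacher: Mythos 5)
Your proposal is correct and follows essentially the same route as the paper: differentiate the duality identity (\ref{eq-dual-2}) at $t=0$ and identify the resulting limit with $\mathcal{L}\mathcal{G}_{\mu}(f,\eta)$ via Proposition \ref{ge-pro}. The paper's proof is just the two-line version of this; your first-jump decomposition merely spells out the computation that the paper (and Proposition \ref{ge-pro}) leaves implicit.
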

\begin{proof}
Applying (\ref{eq-dual-2}), one can see that
\begin{equation*}
\begin{split}
\mathcal{L}^{*}\mathbb{F}_{f,\,\eta,\,n}\(\mu\)=&\lim_{t\rightarrow0}\frac{Q_t\mathbb{F}_{f,\,\eta,\,n}\(\mu\)-\mathbb{F}_{f,\,\eta,\,n}\(\mu\)}{t}\\
=&\lim_{t\rightarrow0}\frac{\mathbbm{P}\left[\mathcal{G}_{\mu}\(Y\(t\),\eta\(t\)\)\right]-\mathcal{G}_{\mu}\(f,\eta\)}{t}\\
=&\mathcal{L}\mathcal{G}_{\mu}\(f,\eta\).
\end{split}
\end{equation*}

\end{proof}

\section{Stationary distribution of the generalized stepping stone model}\label{sec5}
In this section we discuss the limit stationary distribution for the generalized stepping stone model with $\Xi$-resampling mechanism under the condition below.

\noindent{\bf Condition A:} The mutation operator
$A$ generates an irreducible semigroup $\(T_t\)_{t\geq 0}$ and $\tilde{\pi}$ is the unique invariant measure in ${M}_1(E)$ such that $T^{*}_t\nu\rightarrow\tilde{\pi}$ weakly for any $\nu\in{M}_1\(E\)$
as $t\rightarrow\infty$, where $T_t^{*}$ is the adjoint for $T_t$.

\begin{theorem}
Under Condition A, the generalized stepping stone model with $\Xi$-resampling mechanism $\({\mu}(t)\)_{t\geq 0}$ has a unique
stationary distribution $\Pi\in {M}_1({M}_1(E^2))$ such that
\begin{equation}\label{stationary distribution}
\int_{{M}_1(E^2)}\<\mu_{\xi}, f\>\Pi(d{\mu})=
\<\tilde{\pi}, f\>,
\end{equation} for any $\xi\in \mathbbm{S}$ and $f\in B\(E\)$.
\end{theorem}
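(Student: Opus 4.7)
I would exploit the moment duality \eqref{eq-dual-2}, the long-time collapse of the dual $(\Xi,\text{A},u_{\mathbbm{1}},u_{\mathbbm{2}})$-CPGM, and the ergodicity of $(T_t)$ from Condition A. Because the migration rates $u_{\mathbbm{1}},u_{\mathbbm{2}}$ and every within-colony coalescent rate $\lambda_k$ for $k\geq 2$ are strictly positive, the dual started from any $(f,\eta)\in B(E^n)\times\mathbbm{S}^n$ has an almost surely finite random time $\tau$ beyond which only one block remains; for $t\geq\tau$ the dual becomes $Y(t)=T^{(1)}_{t-\tau}Y(\tau)$ with label $\eta(t)$ a two-state CTMC on $\mathbbm{S}$.

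The first substantive step is to show that for every monomial $\mathbb{F}_{f,\eta,n}\in C_p(M_1(E^2))$ with $f=\bigotimes_{i=1}^n f_i$, $f_i\in C(E)$, the moment $Q_t\mathbb{F}_{f,\eta,n}(\mu)=\mathbbm{P}[\mathcal{G}_\mu(Y(t),\eta(t))]$ converges as $t\to\infty$ to a limit independent of $\mu$. The coagulations $\Phi^{\mathbbm{i}}_\pi$ and the Feller flows $T^{(m)}_s$ preserve the property of being a finite product of continuous one-variable functions, so $Y(\tau)$ is such a product. Condition A then gives $\int T_s h\,d\nu=\int h\,dT_s^{\ast}\nu\to\langle\tilde\pi,h\rangle$ for every $h\in C(E)$ and $\nu\in M_1(E)$, and combined with the uniform bound $\|Y(t)\|_\infty\leq\|f\|_\infty$, the fact $\mathbbm{P}(\tau>t)\to 0$, and a block-by-block factorization, dominated convergence yields
\[
\lim_{t\to\infty}Q_t\mathbb{F}_{f,\eta,n}(\mu)=\mathbbm{P}\bigl[\langle\tilde\pi,Y(\tau)\rangle\bigr]=:\Phi(f,\eta).
\]

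Since $E$ is compact, so are $M_1(E^2)$ and $M_1(M_1(E^2))$ in the weak topology, and Stone--Weierstrass shows $C_p$ is dense in $C(M_1(E^2))$. Therefore the pointwise convergence of moments upgrades to weak convergence $Q_t^{\ast}\delta_\mu\to\Pi$ toward the \emph{unique} $\Pi\in M_1(M_1(E^2))$ satisfying $\int\mathbb{F}\,d\Pi=\Phi(\mathbb{F})$ for $\mathbb{F}\in C_p$. Stationarity drops out of the semigroup identity $\int Q_s\mathbb{F}\,d\Pi=\lim_t Q_s(Q_t\mathbb{F})(\mu)=\lim_t Q_{s+t}\mathbb{F}(\mu)=\int\mathbb{F}\,d\Pi$, and uniqueness from the observation that any invariant $\Pi'$ satisfies $\int\mathbb{F}\,d\Pi'=\int Q_t\mathbb{F}\,d\Pi'\to\Phi(\mathbb{F})$ by bounded convergence, so $\Pi'=\Pi$.

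For the identity \eqref{stationary distribution}, specialize to $\eta=(\xi)$, $n=1$, $f\in C(E)$: since $n=1$ forces $\tau=0$ and $Y(\tau)=f$, the convergence step reads $\Phi(f,(\xi))=\langle\tilde\pi,f\rangle$. Both sides of $\int\langle\mu_\xi,f\rangle\,\Pi(d\mu)=\langle\tilde\pi,f\rangle$ are bounded linear in $f$ and continuous along bounded monotone sequences, so the identity extends to $f\in B(E)$ by a monotone class argument. The main technical care in the whole proof sits in the convergence step: Condition A provides only weak convergence $T_s^{\ast}\nu\to\tilde\pi$, so one must keep $Y(t)$ continuous throughout the entire dual evolution and track how the coagulations interact with the tensor-product structure. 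Once that is in place, everything else reduces to the dual formula and the compactness of $M_1(E^2)$.
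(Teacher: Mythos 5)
Your proposal is correct, and the identification of the limit via the dual --- letting the $(\Xi,\text{A},u_{\mathbbm{1}},u_{\mathbbm{2}})$-CPGM collapse to a single block at the a.s.\ finite time $\tau$ and then invoking $T_s^{*}\nu\to\tilde{\pi}$ --- is essentially the paper's computation \eqref{limit state distribution}. Where you genuinely diverge is in how the stationary law $\Pi$ is produced. The paper runs a Krylov--Bogoliubov argument: it shows the Ces\`aro averages $t^{-1}\int_0^t\mathbbm{Q}[\mu^{-1}(s)]\,ds$ are tight in $M_1(M_1(E^2))$ (via tightness of $(T_t^{*},T_t^{*})\mu$), extracts a limit point, and checks invariance; uniqueness and \eqref{stationary distribution} then come from evaluating moments of any invariant measure through the dual. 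You instead prove direct convergence $Q_t^{*}\delta_\mu\to\Pi$ by establishing convergence of every monomial moment to a $\mu$-independent limit $\Phi(f,\eta)=\mathbbm{P}[\langle\tilde\pi,Y(\tau)\rangle]$ and then using compactness of $M_1(M_1(E^2))$ together with the Stone--Weierstrass density of $C_p$. Since $E=[0,1]$ is compact, the tightness the paper works to establish is automatic, so your route is cleaner in this setting and buys strictly more: ergodicity (convergence to $\Pi$ from every initial condition), from which uniqueness falls out by bounded convergence rather than by a separate moment-matching step. The paper's Ces\`aro argument would be the one to keep if $E$ were non-compact. Two points where you are actually more careful than the source: you note that $Y(\tau)$ must be kept continuous for the weak convergence $T_s^{*}\nu\to\tilde\pi$ to be usable (the coagulation maps and $T_t^{(m)}$ preserve tensor products of $C(E)$ functions), and you supply the monotone-class extension from $f\in C(E)$ to $f\in B(E)$ in \eqref{stationary distribution}, a step the paper's display \eqref{limit state distribution} glosses over. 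The one assertion you share with the paper without full proof is that $\tau<\infty$ a.s.; this needs $\Xi\neq 0$ so that $\lambda_{2;2;0}=\Xi(\Delta)>0$, after which positivity of $u_{\mathbbm{1}},u_{\mathbbm{2}}$ makes the labelled block-counting chain a.s.\ absorbed at a single block.
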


\begin{proof}
The result of the current theorem for the $\Xi$-Fleming-Viot process and the classical stepping stone model was  proved
in  \cite{LLXZ} and \cite{Handa_2}, respectively.
Our proof is an adaption  of their work.

We first show the existence of a stationary distribution for the generalized stepping stone model.
As $\(T_t^*, T_t^*\)\mu\rightarrow\(\tilde{\pi},\tilde{\pi}\)$, the family $\{\(T_t^*,T_t^*\)\mu:\ t\ge
0\}$ is pre-compact, and hence, tight in ${M}_1(E^2)$.
Thus, for any
$\ep>0$, there exists a compact subset $K_\ep$ of $E^2$ such that
$\(T_t^*,T_t^*\)\mu\(K^c_\ep\)<\ep$ for all $t\ge 0$. Let
\[\KK_\ep=\left\{\mu\in {{M}}_1(E^2):\;\mu(K^c_{\ep k^{-1}2^{-k}})\le
k^{-1},\;\;\forall\ k\ge 1\right\}.\] For any $\de>0$, choose  $k\ge 1$
be such that $k^{-1}<\de$. Then for all $\mu\in\KK_\ep$, we have
\[\mu(K^c_{\ep k^{-1}2^{-k}})\le
k^{-1}<\de,\] and hence, $\KK_\ep$ is tight in ${M}_1(E^2)$. Then
$\KK_\ep$ is a pre-compact subset of ${M}_1(E^2)$.
Note that
\begin{eqnarray*}
t^{-1}\int^t_0\mathbbm{Q}\left[\mu^{-1}\(s\)\right]ds(\KK^c_\ep):&=&t^{-1}\int^t_0\mathbbm{Q}\left[\mu\(s\)\in\KK^c_\ep\right]ds
\\&=&t^{-1}\int^t_0\mathbbm{Q}\left[\exists\
k\ge
1,\;\;\mu\(s\)(K^c_{\ep k^{-1}2^{-k}})>k^{-1}\right]ds\\
&\le&t^{-1}\int^t_0\sum^\infty_{k=1}k\mathbbm{Q}\left[ \mu\(s\)(K^c_{\ep k^{-1}2^{-k}})\right]ds\\
&=&t^{-1}\int^t_0\sum^\infty_{k=1}k \(T_s^*, T_s^*\)\mu(K^c_{\ep k^{-1}2^{-k}})ds\\
&<&t^{-1}\int^t_0\sum^\infty_{k=1}k \ep k^{-1}2^{-k}ds=\ep.
\end{eqnarray*}
Thus, the family  $\left\{t^{-1}\int^t_0\mathbbm{Q}[\mu^{-1}\(s\)]ds:\;t\ge
0\right\}$ is tight, and hence, pre-compact in ${M}_1({M}_1(E^2))$. Let
$\Pi$ be a limit point. Then there exists a sequence $(t_n)$ such
that $t_n\uparrow \infty$ and
\[\lim_{n\goto\infty }t_n^{-1}\int^{t_n}_0\mathbbm{Q}[\mu^{-1}\(s\)]ds=\Pi.\]
For any $r\ge 0$,
\begin{eqnarray*}
\Pi [\mu^{-1}\(r\)]&=&\lim_{n\to\infty}t_n^{-1}\int^{t_n}_0\mathbbm{Q}[\mu^{-1}\(s\)]\circ[\mu^{-1}\(r\)]
ds\\
&=&\lim_{n\to\infty}t_n^{-1}\int^{t_n+r}_r\mathbbm{Q}[\mu^{-1}\(s\)]ds\\
&=&\Pi.
\end{eqnarray*}
Namely, $\Pi$ is an invariant measure of the stochastic process
$\(\mu\(t\)\)_{t\geq 0}$.

It follows from the construction of the dual $\(\Xi,\text{A},u_{\mathbbm{1}},u_{\mathbbm{2}}\)$-CPGM that
$|\eta(t)|$ is non-increasing with respect to $t$ and
$\lim_{t\rightarrow\infty}|\eta(t)|=1$. Denote by $\tau=\inf\{t\geq0: |\eta\(t\)|=1\}$.
Note that $\tau<\infty$ a.s. and $\(Y\(t\), \eta\(t\)\)\in {B}(E)\times {\mathbbm{S}}$ for any $t\geq\tau$. One can see that
\begin{equation}\label{limit state distribution}
\begin{split}
\int_{M_1\(E^2\)}\left<\mu_{\eta},f\right>\Pi\(d\mu\)
=&\lim_{t\rightarrow\infty}\mathbbm{Q}_{\Pi}\left<\mu\(t\)_{\eta},f\right>\\
=&\lim_{t\rightarrow\infty}\int_{M_1\(E^2\)}\mathbbm{P}_{\(f,\,\eta\)}\left<\mu_{\eta\(t\)},Y\(t\)\right>\Pi\(d\mu\)\\
=&\lim_{t\rightarrow\infty}\int_{M_1\(E^2\)}\mathbbm{P}_{\(f,\,\eta\)}\left[\left<\mu_{\eta\(t\)},Y\(t\)\right>{\ind_{\{\tau\leq t\}}}\right]\Pi\(d\mu\)\\
=&\lim_{t\rightarrow\infty}\int_{M_1\(E^2\)}\mathbbm{P}_{\(f,\,\eta\)}\mathbbm{P}_{\(Y\(\tau\),\,\eta\(\tau\)\)}\left[\left<\mu_{\eta\(t-\tau\)},Y\(t-\tau\)\right>{\ind_{\{\tau\leq t\}}}\right]\Pi\(d\mu\)\\
=&\lim_{t\rightarrow\infty}\int_{M_1\(E^2\)}\mathbbm{P}_{\(f,\,\eta\)}\mathbbm{P}_{\(Y\(\tau\),\,\eta\(\tau\)\)}\left<\mu_{\eta\(t\)},Y\(t\)\right>\Pi\(d\mu\)\\
=&\lim_{t\rightarrow\infty}\int_{M_1\(E^2\)}\mathbbm{P}_{\(f,\,\eta\)}\left<\mu_{\eta\(t+\tau\)},T_tY\(\tau\)\right>\Pi\(d\mu\)\\
=&\lim_{t\rightarrow\infty}\int_{M_1\(E^2\)}\mathbbm{P}_{\(f,\,\eta\)}\left<T_t^{*}\mu_{\eta\(t+\tau\)},Y\(\tau\)\right>\Pi\(d\mu\)\\
=&\mathbbm{P}_{\(f,\,\eta\)}\left[\left<\tilde{\pi},Y\(\tau\)\right>\right].
\end{split}
\end{equation}
Thus, (\ref{stationary distribution}) follows from (\ref{limit state distribution}) with
$n=1$.
\end{proof}

\section{Reversibility of the generalized stepping stone model}\label{sec6}
In this section we discuss the reversibility of the generalized stepping stone model with $\Xi$-resampling mechanism. 
In view of the well-known results in the literature, we make the following
assumption on the mutation operator.

\noindent{\bf Condition B:} The mutation operator
$A$ is of the uniform type, i.e.,
$$Af(x)=\frac{\theta}{2}\int_E\(f(y)-f(x)\)\nu_0(dy)$$
for some $\theta>0$, $\nu_0\in M_1{\(E\)}$ and $f\in B\(E\)$.

\begin{definition}
The generalized stepping stone model with $\Xi$-resampling mechanism $({\mu}(t))_{t\geq0}=(\mu_{\mathbbm{1}}(t),\mu_{\mathbbm{2}}(t))_{t\geq0}$ with stationary
distribution $\Pi$ is {\it reversible} if  for any
nonnegative integers $n, m, p$ and $q$, we have
\begin{equation}\label{eq-rev}
\begin{split}
&\mathbbm{Q}_{\Pi}\left[\(\<{{\mu_{\mathbbm{1}}}(t)}^{\otimes n}, f^{{\mathbbm{1}}}\>\<{{\mu_{\mathbbm{2}}}(t)}^{\otimes
m},
f^{{\mathbbm{2}}}\>\)\mathcal{L}^{*}\(\<{{\mu_{\mathbbm{1}}}(t)}^{\otimes p}, g^{{\mathbbm{1}}}\>\<{{\mu_{\mathbbm{2}}}(t)}^{\otimes q}, g^{{\mathbbm{2}}}\>\)\right]\\
&=\mathbbm{Q}_{\Pi}\left[\(\<{\mu_{\mathbbm{1}}(t)}^{\otimes p}, g^{{\mathbbm{1}}}\>\<{{\mu_{\mathbbm{2}}}(t)}^{\otimes
q}, g^{{\mathbbm{2}}}\>\)\mathcal{L}^{*}\(\<{{\mu_{\mathbbm{1}}}(t)}^{\otimes n}, f^{{\mathbbm{1}}}\>\<{{\mu_{\mathbbm{2}}}(t)}^{\otimes
m}, f^{{\mathbbm{2}}}\>\)\right],
\end{split}
\end{equation}
where $f^{{\mathbbm{1}}}\in {B}(E^n)$, $f^{{\mathbbm{2}}}\in {B}(E^m)$, $g^{{\mathbbm{1}}}\in
{B}(E^{p})$, $g^{{\mathbbm{2}}}\in {B}(E^{q})$ and $\mathcal{L}^{*}$ is the generator given by $(\text{\ref{generator}})$.
Without loss of generality, we denote by $\<{\mu_{\mathbbm{i}}(t)}^{\otimes 0}, \cdot\>=1$ with $\mathbbm{i}\in\{\mathbbm{1,2}\}$ and $\mathcal{L}^{*}1=0$.
\end{definition}
\begin{theorem}\label{reversibility theorem}
Under Condition B, the generalized stepping stone model with $\Xi$-resampling mechanism is not reversible.
\end{theorem}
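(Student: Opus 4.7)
My plan is to falsify the reversibility identity (\ref{eq-rev}) on an explicit pair of simple monomials. Fix a Borel set $C\in\mathcal{E}$ with $a:=\nu_{0}(C)\in(0,1)$, write $\alpha:=\mu_{\mathbbm{1}}(C)$ and $\beta:=\mu_{\mathbbm{2}}(C)$, and test (\ref{eq-rev}) on $F(\mu):=\alpha$ and $G(\mu):=\alpha^{2}$, which corresponds to $n=1$, $m=0$, $f^{\mathbbm{1}}=\mathbf{1}_{C}$ and $p=2$, $q=0$, $g^{\mathbbm{1}}=\mathbf{1}_{C}\otimes\mathbf{1}_{C}$. This is the simplest structurally unbalanced pair: $\mathcal{L}^{*}F$ has only mutation and migration contributions, while $\mathcal{L}^{*}G$ carries in addition the $\Xi$-coalescence term $\lambda_{2}(\alpha-\alpha^{2})$, where $\lambda_{2}:=\lambda_{2;2;0}$.

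First, using (\ref{generator}) together with Condition B, a direct computation gives
\begin{equation*}
\mathcal{L}^{*}F = \tfrac{\theta}{2}(a-\alpha) + u_{\mathbbm{2}}(\beta-\alpha),
\end{equation*}
\begin{equation*}
\mathcal{L}^{*}G = \theta\alpha(a-\alpha) + \lambda_{2}(\alpha-\alpha^{2}) + 2u_{\mathbbm{2}}(\alpha\beta-\alpha^{2}),
\end{equation*}
and multiplying out yields
\begin{equation*}
F\mathcal{L}^{*}G - G\mathcal{L}^{*}F = \tfrac{\theta}{2}(a\alpha^{2}-\alpha^{3}) + \lambda_{2}(\alpha^{2}-\alpha^{3}) + u_{\mathbbm{2}}(\alpha^{2}\beta-\alpha^{3}).
\end{equation*}
The joint stationary moments $\mathbbm{Q}_{\Pi}[\alpha^{i}\beta^{j}]$ with $i+j\leq 3$ needed to take the expectation of this expression can be obtained either from a multi-block extension of the duality identity (\ref{limit state distribution}), which represents each moment as $\mathbbm{P}_{(h,\xi)}[\langle\nu_{0}, Y(\tau)\rangle]$ with $\tau$ the absorption time of the dual at a single block, or by inverting the finite linear system $\mathbbm{Q}_{\Pi}[\mathcal{L}^{*}(\alpha^{i}\beta^{j})]=0$ for all monomials of total degree at most three, which is non-degenerate under Condition B and assumption (III).

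The crux of the argument, and the main obstacle, is to show that $\mathbbm{Q}_{\Pi}[F\mathcal{L}^{*}G-G\mathcal{L}^{*}F]$ does not vanish. As a sanity check, in the degenerate limit $u_{\mathbbm{2}}\to 0$ with $\Xi$ the Kingman measure the expression collapses to zero after taking the expectation, recovering classical Fleming-Viot reversibility under uniform mutation. When $\Xi$ is non-Kingman, the first two terms $\tfrac{\theta}{2}(aM_{2}-M_{3})+\lambda_{2}(M_{2}-M_{3})$ already fail to vanish in the single-colony limit by an analogue of the computation in \cite{LLXZ}; when $\Xi$ is Kingman, the partial derivative in $u_{\mathbbm{2}}$ at $u_{\mathbbm{2}}=0$ equals $(aM_{2}-M_{3})|_{u_{\mathbbm{2}}=0}$, which is the non-trivial rational function of $a$ driving the two-island non-reversibility of \cite{AXD}. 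Since $\mathbbm{Q}_{\Pi}[F\mathcal{L}^{*}G - G\mathcal{L}^{*}F]$ depends rationally on the parameters and on $a$, non-vanishing at small $u_{\mathbbm{2}}$ propagates to a dense open set of migration parameters, and a further choice of the set $C$ removes any remaining exceptional loci in $a$; hence (\ref{eq-rev}) fails under assumptions (I)-(III) and Condition B, completing the proof of Theorem \ref{reversibility theorem}.
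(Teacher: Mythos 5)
Your computation of $\mathcal{L}^{*}F$, $\mathcal{L}^{*}G$ and of the integrand $F\mathcal{L}^{*}G-G\mathcal{L}^{*}F$ is correct, and the general strategy (test the symmetry (\ref{eq-rev}) on monomials and evaluate against stationary moments) is the paper's. But the single test pair you chose cannot prove the theorem. Writing, in your notation, $M_{i,j}=\mathbbm{Q}_{\Pi}\left[\mu_{\mathbbm{1}}(C)^{i}\mu_{\mathbbm{2}}(C)^{j}\right]$ and using the stationarity identity $\mathbbm{Q}_{\Pi}\left[\mathcal{L}^{*}(\alpha^{3})\right]=0$, which reads
\begin{equation*}
\tfrac{3\theta}{2}\left(aM_{2,0}-M_{3,0}\right)+3\lambda_{3;2;1}\left(M_{2,0}-M_{3,0}\right)+\lambda_{3;3;0}\left(a-M_{3,0}\right)+3u_{\mathbbm{2}}\left(M_{2,1}-M_{3,0}\right)=0,
\end{equation*}
together with the consistency relation $\lambda_{2;2;0}=\lambda_{3;2;1}+\lambda_{3;3;0}$, your obstruction collapses to
\begin{equation*}
\mathbbm{Q}_{\Pi}\left[F\mathcal{L}^{*}G-G\mathcal{L}^{*}F\right]=\lambda_{3;3;0}\left(M_{2,0}-\tfrac{2}{3}M_{3,0}-\tfrac{a}{3}\right).
\end{equation*}
This vanishes identically whenever $\lambda_{3;3;0}=0$, i.e.\ whenever $\Xi$ is concentrated at $\mathbf{0}$ (the Kingman case), which the theorem must cover; in particular your claim that in the Kingman case the $u_{\mathbbm{2}}$-derivative at $u_{\mathbbm{2}}=0$ is a nontrivial function of $a$ is false --- the obstruction is zero there for \emph{all} $u_{\mathbbm{1}},u_{\mathbbm{2}}$, so this pair is blind to the two-island irreversibility of \cite{AXD}. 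Even when $\lambda_{3;3;0}>0$ the bracket can vanish (to leading order in small coalescence rates it is $-\tfrac{1}{3}a(1-a)(1-2a)$, which dies at $a=1/2$), so the test can also fail for particular $\nu_{0}$.

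The second gap is that you never actually evaluate the moments: the assertion that non-vanishing ``propagates to a dense open set of migration parameters'' does not prove non-vanishing for the given, fixed $(\Xi,\theta,\nu_{0},u_{\mathbbm{1}},u_{\mathbbm{2}})$, which is what the theorem claims. The paper's proof is organized precisely around these degeneracies: the pair $(n,m)=(1,0)$ vs.\ $(p,q)=(0,1)$ has obstruction proportional to $u_{\mathbbm{1}}-u_{\mathbbm{2}}$, forcing $u_{\mathbbm{1}}=u_{\mathbbm{2}}$; the pair $(2,0)$ vs.\ $(0,1)$ then forces $\nu_{0}(E^{*})=1/2$; and only after solving the fourth-moment system and testing $(1,1)$ vs.\ $(2,0)$ together with $(2,1)$ vs.\ $(1,0)$ does one reach an unconditional contradiction (first $\lambda_{3;3;0}=0$, then, via the consistency relations among the rates, $4a^{3}=0$ with $a=\lambda_{2;2;0}>0$). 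To repair your argument you would have to compute the stationary moments explicitly from the linear systems $\mathbbm{Q}_{\Pi}\left[\mathcal{L}^{*}(\alpha^{i}\beta^{j})\right]=0$ (as you suggest) and then run a sequence of test pairs that eliminates every exceptional parameter configuration --- which is essentially the paper's Appendix.
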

\begin{proof}
The proof is based on computations  of moments and joint moments of different orders. The software {\it Maple} is used to
carry out symbolic calculations. We sketch the main steps and refer to Appendix for more details.

Let $E^{*}$ be any subset of $E$ with $0<\nu_0(E^{*})=\alpha<1$.  We start with $(p,q)=(0,0)$. By choosing different values of $n$, $m$,
$f^{\mathbbm{1}}$ and $f^{\mathbbm{2}}$ in (\ref{eq-rev}), expressions for the moments and joint moments
\begin{equation*}
\begin{split}
&\mathbbm{Q}\left[{\mu_{\mathbbm{1}}}(E^{*})\right],\,\,\mathbbm{Q}\left[{\mu_{\mathbbm{2}}}(E^{*})\right],\,\,\mathbbm{Q}\left[{\mu_{\mathbbm{1}}}^2(E^{*})\right],\,\,\mathbbm{Q}\left[{\mu_{\mathbbm{1}}}(E^{*}){\mu_{\mathbbm{2}}}(E^{*})\right],\,\,\mathbbm{Q}\left[{\mu_{\mathbbm{2}}}^2(E^{*})\right],\\
&\mathbbm{Q}\left[{\mu_{\mathbbm{1}}}^3(E^{*})\right],\,\,\mathbbm{Q}\left[{\mu_{\mathbbm{1}}}^2(E^{*}){\mu_{\mathbbm{2}}}(E^{*})\right],\,\,\mathbbm{Q}\left[{\mu_{\mathbbm{1}}}(E^{*}){\mu_{\mathbbm{2}}}^2(E^{*})\right],\,\,\mathbbm{Q}\left[{\mu_{\mathbbm{2}}}^3(E^{*})\right],\\
&\mathbbm{Q}\left[{\mu_{\mathbbm{1}}}^4(E^{*})\right],\,\,\mathbbm{Q}\left[{\mu_{\mathbbm{1}}}^3(E^{*}){\mu_{\mathbbm{2}}}(E^{*})\right],\,\,\mathbbm{Q}\left[{\mu_{\mathbbm{1}}}^2(E^{*}){\mu_{\mathbbm{2}}}^2(E^{*})\right],
\mathbbm{Q}\left[{\mu_{\mathbbm{1}}}(E^{*}){\mu_{\mathbbm{2}}}^3(E^{*})\right],\,\,\mathbbm{Q}\left[{\mu_{\mathbbm{2}}}^4(E^{*})\right]
\end{split}
\end{equation*}
are all available.

We then consider the case that $(p,q)\neq(0,0)$.
For  $$(n, m)=(1, 0),\,(p, q)=(0,1)\,\text{and}\,f^{\mathbbm{1}}=g^{\mathbbm{2}}=\ind_{E^{*}},$$ by (\ref{eq-rev})
we have that the condition  ${u_{\mathbbm 1}}={u_{\mathbbm 2}}$ is necessary for the process being reversible.
Further substituting  $$(n, m)=(2, 0),\,(p, q)=(0,1)\,\text{and}\,f^{\mathbbm{1}}=\ind_{E^{*}\times E^{*}},\,g^{\mathbbm{2}}=\ind_{E^{*}}$$ in (\ref{eq-rev}), we have that
the condition  $\al=1/2$ is necessary for the process being reversible.
Given $\al=1/2$ and ${u_{\mathbbm 1}}={u_{\mathbbm 2}}$, we choose
$$(n, m)=(1, 1),\,(p, q)=(2,0),\,f^{\mathbbm{1}}=f^{\mathbbm{2}}=\ind_{E^{*}},\,g^{\mathbbm{1}}=\ind_{E^{*}\times E^{*}}$$ and
$$(n, m)=(2, 1),\,(p, q)=(1,0),\,f^{\mathbbm{1}}=\ind_{E^{*}\times E^{*}},\,f^{\mathbbm{2}}=g^{\mathbbm{1}}=\ind_{E^{*}}$$ in (\ref{eq-rev}), respectively. Note that the two equations can not  be true at the same time and we reach  a contradiction.
Thus, the process is not reversible.
\end{proof}

\begin{appendices}
\section{The detailed calculations in the proof of Theorem \ref{reversibility theorem}}
For simplicity of notation, we write
\begin{equation*}
\begin{split}
&a_2=\la_{2;2;0},\,\,\,\,\,\,a_{21}=\la_{3;2;1},\,\,\,\,\,\,a_3=\la_{3;3;0},\\
&a_{211}=\la_{4;2;2},\,\,a_{22}=\la_{4;2,2;0},\,\,a_{31}=\la_{4;3;1}, \,\,a_4=\la_{4;4;0}.
\end{split}
\end{equation*}
Let $E^{*}$ be  a subset of the type space $E$ with $\nu_0\(E^{*}\)=\al>0$. For any positive integers  $n$ and $m$, the joint moments are defined as $$M_{n,m}=\mathbbm{Q}\left[{\mu_{\mathbbm{1}}}^n\(E^{*}\){\mu_{\mathbbm{2}}}^m\(E^{*}\)\right].$$

We begin with the discussion for the first moments, i.e., $n+m=1$. \\
For $(n,m)=(1,0),\,(p,q)=(0,0)$ and $f^{{\mathbbm{1}}}=\ind_{E^{*}}$ in (\ref{eq-rev}), we have
\begin{equation}\label{E_{11}}
\left(\frac{\theta}{2}+u_{\mathbbm 2}\right)M_{1,0}-u_{\mathbbm 2}M_{0,1}=\frac{\theta\al}{2}.
\end{equation}
For $(n,m)=(0,1),\,(p,q)=(0,0)$ and $f^{{\mathbbm{2}}}=\ind_{E^{*}}$ in (\ref{eq-rev}), we have
\begin{equation}\label{E_{12}}
-u_{\mathbbm 1}M_{1,0}+\left(\frac{\theta}{2}+u_{\mathbbm 1}\right)M_{0,1}=\frac{\theta\al}{2}.
\end{equation}
By (\ref{E_{11}}) and (\ref{E_{12}}) we have  $M_{1,0}=M_{0,1}=\al.$

Then we consider the second moments, i.e., $n+m=2$.\\
For $(n,m)=(2,0),\,(p,q)=(0,0)$ and $f^{{\mathbbm{1}}}=\ind_{E^{*}\times E^{*}}$ in (\ref{eq-rev}), we have
\begin{equation}\label{E_{21}}
\left( \theta+2\,u_{\mathbbm 2}+a_
{{2}} \right) M_{{2,0}}-2\,u_{\mathbbm 2}M_{{1,1}}=\theta\,{\alpha}^{2}+a_{{2}}\,\alpha.
\end{equation}
For $(n,m)=(0,2),\,(p,q)=(0,0)$ and $f^{{\mathbbm{2}}}=\ind_{E^{*}\times E^{*}}$ in (\ref{eq-rev}), we have
\begin{equation}\label{E_{22}}
-2\,u_{\mathbbm 1}M_{{1,1}}+\left( \theta+2\,u_{\mathbbm 1}+a_
{{2}} \right) M_{{0,2}}=\theta\,{\alpha}^{2}+a_{{2}}\,\alpha.
\end{equation}
For $(n,m)=(1,1),\,(p,q)=(0,0)$ and $f^{{\mathbbm{1}}}=f^{{\mathbbm{2}}}=\ind_{E^{*}}$ in (\ref{eq-rev}), we have
\begin{equation}\label{E_{23}}
-u_{\mathbbm 1}M_{
{2,0}}+ \left( \theta+u_{\mathbbm 2}+u_{\mathbbm 1} \right) M_{{1,1}}-u_{\mathbbm 2}M_{{0,2}}=\theta\,{\alpha}^{2}.
\end{equation}
(\ref{E_{21}})-(\ref{E_{23}}) constitute a system of linear equations. Note that the coefficient matrix of
$M_{2,0}$, $M_{1,1}$, $M_{0,2}$ is invertible. Thus, the solution is unique.
Substituting $(n,m)=(1,0),\,(p,q)=(0,1)$ and $f^{{\mathbbm{1}}}=\ind_{E^{*}},\,g^{{\mathbbm{2}}}=\ind_{E^{*}}$ in (\ref{eq-rev}), we have the reversible equation
\begin{equation}\label{S_{1}}
\left( u_{\mathbbm 1}-u_{\mathbbm 2} \right) M_{{1,1}}-u_{\mathbbm 1}M_{{2,0}}+u_{\mathbbm 2}M_{{0,2}}=0.
\end{equation}
Replacing the second moments, the numerator of (\ref{S_{1}}) equals to $0$, i.e.,
\begin{equation*}
{{\alpha\,\theta\,a_{{2}} \left(u_{\mathbbm 1}-u_{\mathbbm 2} \right)  \left( \theta+2\,u_{\mathbbm 1}
+a_{{2}}+2\,u_{\mathbbm 2} \right)  \left( \alpha-1 \right) }}=0.
\end{equation*}
Thus, a necessary condition for this process being reversible is
that ${u_{\mathbbm 1}}={u_{\mathbbm 2}}$.

Now we continue to consider the third moments, i.e., $m+n=3$.\\
For $(n,m)=(3,0),\,(p,q)=(0,0)$ and $f^{{\mathbbm{1}}}=\ind_{E^{*}\times E^{*}\times E^{*}}$ in (\ref{eq-rev}), we have
\begin{equation}\label{E_{31}}
\left( 3\,a_{{21}}+a_{{3}}+\frac32\,\theta+3\,u_{\mathbbm 2} \right) M_{{3,0}}-3\,u_{\mathbbm 2}M_{
{2,1}}=\left( 3\,a_{{21}}+\frac32\,\alpha\,\theta \right) M_{{2,0}}+a_{{3}}\alpha.
\end{equation}
For $(n,m)=(2,1),\,(p,q)=(0,0)$ and $f^{{\mathbbm{1}}}=\ind_{E^{*}\times E^{*}},\,f^{{\mathbbm{2}}}=\ind_{E^{*}}$ in (\ref{eq-rev}), we have
\begin{equation}\label{E_{32}}
-u_{\mathbbm 1}M_{{3,0}}+\left( a_{{2}}+\frac32\,\theta+2\,u_{\mathbbm 2}+u_{\mathbbm 1} \right) M_{{2,1}}-2\,u_{\mathbbm 2}M_{{1,2
}}=\left( \alpha\,\theta+a_{{2}} \right) M_{{1,1}}+\frac{\theta\,\alpha}{2}\,M
_{{2,0}}.
\end{equation}
For $(n,m)=(1,2),\,(p,q)=(0,0)$ and $f^{{\mathbbm{1}}}=\ind_{E^{*}},\,f^{{\mathbbm{2}}}=\ind_{E^{*}\times E^{*}}$ in (\ref{eq-rev}), we have
\begin{equation}\label{E_{33}}
-2
\,u_{\mathbbm 1}M_{{2,1}}+\left( a_2+\frac32\,\theta+u_{\mathbbm 2}+2\,u_{\mathbbm 1} \right) M_{{1,2}}-u_{\mathbbm 2}M_{{0,3}}=\left( \alpha\,\theta+a_{{2}} \right) M_
{{1,1}}+\frac{\theta\,\alpha}{2}\,M_{{0,2}}.
\end{equation}
For $(n,m)=(0,3),\,(p,q)=(0,0)$ and $f^{{\mathbbm{2}}}=\ind_{E^{*}\times E^{*}\times E^{*}}$ in (\ref{eq-rev}), we have
\begin{equation}\label{E_{34}}
-3\,u_{\mathbbm 1}M_{{
1,2}}+\left( 3\,a_{{21}}+a_{{3}}+\frac32\,\theta+3\,u_{\mathbbm 1}\right) M_{{0,3}}=\left( 3\,a_{{21}}+\frac32\,\alpha\,\theta \right) M_{{0,2}}+a_{{3}}\alpha.
\end{equation}
(\ref{E_{31}})-(\ref{E_{34}}) constitute a system of linear equations and the coefficient matrix of $M_{3,0},\,M_{2,1}$, $M_{1,2},\,M_{0,3}$ is invertible. Thus, the system of equations has a unique solution.
Substituting $(n,m)=(2,0),\,(p,q)=(0,1)$ and $f^{{\mathbbm{1}}}=\ind_{E^{*}\times E^{*}},\,g^{{\mathbbm{2}}}=\ind_{E^{*}}$ in (\ref{eq-rev}), we have
\begin{equation}\label{T_{1}}
\left( \alpha\,\theta+a_{{2}} \right) M_{{1,1}}-\frac{\theta\,\alpha}{2}\,M
_{{2,0}}- \left( 2\,u_{\mathbbm 2}+a_{{2}}+\frac{\theta}{2}-u_{\mathbbm 1} \right) M_{{2,1}}-u_{\mathbbm 1}M_{{3,0}}+
2\,u_{\mathbbm 2}M_{{1,2}}=0.
\end{equation}
Note that $a_2=a_{21}+a_3$. Substituting the expressions of the moments, we have the numerator of (\ref{T_{1}}) equal to $0$, i.e.,
\begin{equation*}
4\,\alpha\,{\theta}^{2}u_{\mathbbm 1} \left( \alpha-1 \right)  \left( 3\,{a_{{21}}}
^{2}+{a_{{3}}}^{2}+8\,a_{{3}}u+2\,a_{{3}}\theta+4\,a_{{3}}a_{{21}}
 \right)  \left( -1+2\,\alpha \right)=0.
\end{equation*}
So far we conclude that the process is not reversible except the case when there exists a subset $E^{*}\subseteq E$ with $\alpha=\nu_0\(E^{*}\)=1/2$.

For the case of $\al=1/2$, we need higher moments to reach a contradiction. \\
For $(n,m)=(4,0),\,(p,q)=(0,0)$ and $f^{{\mathbbm{1}}}=\ind_{E^{*}\times E^{*}\times E^{*}\times E^{*}}$ in (\ref{eq-rev}), we have
\begin{equation}\label{E_{41}}
\begin{split}
& \left( 4\,u_{\mathbbm 2}+6\,a_{{
211}}+3\,a_{{22}}+4\,a_{{31}}+a_{{4}}+2\,\theta \right) M_{{4,0}}-4\,u_{\mathbbm 2}M
_{{31}}\\
&\,\,=\left( 2\,\theta\,\alpha+6\,a_{{211}} \right) M_{{3,0}}+ \left( 3\,a_{
{22}}+4\,a_{{31}} \right) M_{{2,0}}+a_{{4}}\,\alpha.
\end{split}
\end{equation}
For $(n,m)=(3,1),\,(p,q)=(0,0)$ and $f^{{\mathbbm{1}}}=\ind_{E^{*}\times E^{*}\times E^{*}},\,f^{{\mathbbm{2}}}=\ind_{E^{*}}$ in (\ref{eq-rev}), we have
\begin{equation}\label{E_{42}}
\begin{split}
&-u_{\mathbbm 1}M_{{4,0}}+ \left( a_{{3}
}+3\,a_{{21}}+2\,\theta+3\,u_{\mathbbm 2}+u_{\mathbbm 1} \right) M_{{3,1}}-3\,u_{\mathbbm 2}M_{{2,2}}\\
&\,\,= \left( 3\,a_{{21}}+\frac{3\,\theta\,\alpha}{2} \right) M_{{2,1}}+a_{{3}}M_{{1,1
}}+\frac{\,\theta\,\alpha}{2}\,M_{{3,0}}.
\end{split}
\end{equation}
For $(n,m)=(2,2),\,(p,q)=(0,0)$ and $f^{{\mathbbm{1}}}=\ind_{E^{*}\times E^{*}},\,f^{{\mathbbm{2}}}=\ind_{E^{*}\times E^{*}}$ in (\ref{eq-rev}), we have
\begin{equation}\label{E_{43}}
\begin{split}
&-2\,u_{\mathbbm 1}M_{{3,1}}+ \left( 2\,a_{{2}}
+2\,\theta+2\,u_{\mathbbm 2}+2\,u_{\mathbbm 1} \right) M_{{2,2}}-2\,u_{\mathbbm 2}M_{{1,3}}\\
&\,\,=\left( \theta\,\alpha+a_{{2}} \right) M_{{1,2}}+ \left( \theta\,\alpha
+a_{{2}} \right) M_{{2,1}}.
\end{split}
\end{equation}
For $(n,m)=(1,3),\,(p,q)=(0,0)$ and $f^{{\mathbbm{1}}}=\ind_{ E^{*}},\,f^{{\mathbbm{2}}}=\ind_{E^{*}\times E^{*}\times E^{*}}$ in (\ref{eq-rev}), we have
\begin{equation}\label{E_{44}}
 \begin{split}
&-3\,u_{\mathbbm 1}M_{{2,2}}+ \left( 2\,
\theta+u_{\mathbbm 2}+3\,u_{\mathbbm 1}+3\,a_{{21}}+a_{{3}} \right) M_{{1,3}}-u_{\mathbbm 2}M_{{0,4}}\\
 &\,\,=\frac{\theta\,\alpha}{2}\,M_{{0,3}}+ \left( 3\,a_{{21}}+\frac{3\,\theta\,\alpha}{2}
 \right) M_{{1,2}}+a_{{3}}M_{{1,1}}.
 \end{split}
\end{equation}
For $(n,m)=(0,4),\,(p,q)=(0,0)$ and $f^{{\mathbbm{2}}}=\ind_{E^{*}\times E^{*}\times E^{*}\times E^{*}}$ in (\ref{eq-rev}), we have
\begin{equation}\label{E_{45}}
\begin{split}
 &-4\,u_{\mathbbm 1}M_{{
1,3}}+ \left( 6\,a_{{211}}+3
\,a_{{22}}+4\,a_{{31}}+a_{{4}}+2\,\theta+4\,u_{\mathbbm 1} \right) M_{{0,4}}\\
&\,\,=\left( 2\,\theta\,\alpha+6\,a_{{211}} \right) M_{{0,3}}+ \left( 3\,a_{{
22}}+4\,a_{{31}} \right) M_{{0,2}}+a_{{4}}\,\alpha.
\end{split}
\end{equation}
We solve for $M_{4,0},\,M_{3,1},\,M_{2£¬2},\,M_{1,3},\,M_{0,4}$ uniquely from (\ref{E_{41}})-(\ref{E_{45}}) which is also a system of linear equations with invertible coefficient matrix.
Substituting $(n,m)=(1,1),\,(p,q)=(2,0)$ and $f^{{\mathbbm{1}}}=f^{{\mathbbm{2}}}=\ind_{E^{*}},\,g^{{\mathbbm{1}}}=\ind_{E^{*}\times E^{*}}$, we have
\begin{equation}\label{F_{1}}
\begin{split}
\frac{\,\theta\,\alpha}{2}\,M_{{3,0}}-\(a_{{2}}+\frac{\theta\,
\alpha}{2}\)\,M_{{2,1}}-u_{\mathbbm 2}M_{{2,2}}+u_{\mathbbm 1}M_{{4,0}}+\(a_2-u_{\mathbbm 1}+u_{\mathbbm 2}\)M_{{3,1}}=0.
\end{split}
\end{equation}
Substituting $(n,m)=(2,1),\,(p,q)=(1,0)$ and $f^{{\mathbbm{1}}}=\ind_{E^{*}\times E^{*}},\,f^{{\mathbbm{2}}}=g^{{\mathbbm{1}}}=\ind_{E^{*}}$, we have
\begin{equation}\label{F_{2}}
\begin{split}
 \left( a_{{2}}+\frac{\theta\,\alpha}{2} \right) M_{{2,1}}+\frac{\theta\,
\alpha}{2}\,M_{{3,0}}- \left( \theta+u_{\mathbbm 2}+u_{\mathbbm 1}+a_{{2}} \right) M_{{3,1}}+u_{\mathbbm 2}M_{{2,2}}
+u_{\mathbbm 1}M_{{4,0}}=0.
\end{split}
\end{equation}
Substituting all the joint moments, the numerator of $$(\ref{F_{1}})-2\times(\ref{F_{2}})=0$$  is equivalent to
\begin{equation}\label{eq-contra}
\begin{split}
& \left( \theta+4\,u_{\mathbbm{1}} \right) \theta\,u_{\mathbbm{1}}a_{{3}} \left( -8\,a_{{4}}u_{\mathbbm{1}}-4\,a_
{{4}}a_{{3}}-4\,\theta\,a_{{4}}+8\,{a_{{3}}}^{2}+22\,a_{{3}}u_{\mathbbm{1}}+11\,a_{{
3}}\theta+8\,{a_{{21}}}^{2}+16\,a_{{21}}a_{{3}}\right.\\
&\left.\,\,\,\,\,\,\,\,\,\,\,\,+12\,a_{{211}}a_{{3}}-4
\,a_{{4}}a_{{21}}+10\,a_{{21}}u_{\mathbbm{1}}+5\,a_{{21}}\theta+3\,a_{{211}}\theta+6
\,a_{{211}}u_{\mathbbm{1}}  +12\,a_{{211}}a_{{21}}\right)=0.
\end{split}
\end{equation}
The term in the second parentheses  is always positive because
$-8a_4u_{\mathbbm{1}}+22a_3u_{\mathbbm{1}}\geq0$, $-4a_4a_3+8a_3^2\geq0$, $-4\theta a_4+11a_3\theta\geq0$
and
$-4a_4a_{21}+16a_{21}a_{3}\geq0.$
Since $u_{\mathbbm{1}}>0$ and $\theta>0$, a necessary condition for  (\ref{eq-contra}) is that $a_3=0$. By the consistent condition of coalescent rates
\begin{equation*}
\begin{cases}
a_2=a_{21}+a_3;\\
a_3=a_{31}+a_4;\\
a_{21}=a_{211}+a_{22}+a_{31},
\end{cases}
\end{equation*}
$a_3=0$ implies that  $a_4=a_{31}=a_{22}=0$ and $a_{2}=a_{21}=a_{211}=:a>0$. Substituting these values into the numerator of (\ref{F_{2}}), we get
$4a^3=0$ which contradicts $a>0$. Therefore, the process is not reversible.

A Maple note for all the calculations carried out in this paper is available upon request.
\end{appendices}

\renewcommand{\bibname}{References}
\bibliographystyle{plainnat}

\end{document}